\documentclass[reqno]{amsart}
\usepackage{amsmath,amsfonts,amssymb,amsthm}
\usepackage[usenames,dvipsnames]{xcolor}
\usepackage[colorlinks=true]{hyperref} 
\usepackage{url}
\hypersetup{linkcolor=Violet,citecolor=PineGreen}
\newtheorem{teor}{Theorem}[section]
\newtheorem{lema}[teor]{Lemma}

\theoremstyle{definition}
\newtheorem{defi}[teor]{Definition}
\newtheorem{exa}[teor]{Example}

\newtheorem{hipos}[teor]{Hypotheses}

\newtheorem{nota}[teor]{Remark}
\newtheorem{notas}[teor]{Remarks}
\numberwithin{equation}{section}

\newcommand{\R}{\mathbb R}

\newcommand{\C}{\mathbb{C}}

\newcommand{\M}{\mathbb{M}}
\newcommand{\s}{\mathbb{S}}

\newcommand{\mP}{\mathcal{P}}

\newcommand{\ep}{\varepsilon}

\newcommand{\mI}{\mathcal{I}}

\newcommand{\mQ}{\mathcal{Q}}

\newcommand{\W}{\Omega}
\newcommand{\w}{\omega}

\newcommand{\bv}{\mathbf v}
\newcommand{\bc}{\mathbf c}
\newcommand{\bd}{\mathbf d}
\newcommand{\bw}{\mathbf w}
\newcommand{\bx}{\mathbf x}
\newcommand{\by}{\mathbf y}
\newcommand{\bz}{\mathbf z}
\newcommand{\bu}{\mathbf u}

\newcommand{\bcero}{\mathbf 0}
\newcommand{\wit}{\widetilde}
\newcommand{\ws}{\w{\cdot}s}
\newcommand{\wt}{\w{\cdot}t}

\newcommand{\n}[1]{\left\|#1\right\|}
\newcommand{\Frac}[2]{\displaystyle\frac{#1}{#2}}

\newcommand{\lsm}{\left[\begin{smallmatrix}}
\newcommand{\rsm}{\end{smallmatrix}\right]}

\DeclareMathOperator{\Supp}{Supp}

\begin{document}
\title[On the solvability of the Yakubovich minimization problem]
{On the solvability of the Yakubovich linear-quadratic infinite horizon
minimization problem}
\author[R. Fabbri]{Roberta Fabbri}
\author[C. N\'{u}\~{n}ez]{Carmen N\'{u}\~{n}ez}
\address{Dipartimento di Matematica e Informatica \lq Ulisse Dini\rq,
Universit\`{a} degli Studi di Firenze, Via Santa Marta 3, 50139 Firenze, Italy.}
\address{Departamento de Matem\'{a}tica Aplicada,
Universidad de Valladolid, Paseo del Cauce 59, 47011 Valladolid, Spain.}
\email[Roberta Fabbri]{roberta.fabbri@unifi.it}
\email[Carmen N\'{u}\~{n}ez]{carnun@wmatem.eis.uva.es}
\thanks{Partly supported by Ministerio de Econom\'{\i}a y Competitividad / FEDER
under project MTM2015-66330-P, by Ministerio de Ciencia, Innovaci\'{o}n y Universidades
under project RTI2018-096523-B-I00, by European Commission under project
H2020-MSCA-ITN-2014, and by INDAM -- GNAMPA Project 2018}
\keywords{Infinite-horizon control problem, Frequency Theorem, Nonautonomous dynamical systems,
Exponential dichotomy, Rotation number}
\subjclass[2010]{37B55, 49N10, 34F05
}
\date{}
\begin{abstract}
The Yakubovich Frequency Theorem, in its periodic version and in its general
nonautonomous extension, establishes conditions which are equivalent to
the global solvability of a minimization problem of infinite horizon type,
given by the integral in the positive half-line of a quadratic functional
subject to a control system. It also provides the unique minimizing pair
\lq\lq solution, control\rq\rq~and
the value of the minimum. In this paper we establish less restrictive conditions
under which the problem is partially solvable, characterize the set of
initial data for which the minimum exists, and obtain its value as well a
minimizing pair. The occurrence of exponential dichotomy and the
null character of the rotation number for a nonautonomous
linear Hamiltonian system defined
from the minimization problem are fundamental in the analysis.
\end{abstract}
\maketitle
\section{Introduction and main result}
Let us consider the control problem
\begin{equation}\label{1.control}
 \bx' = A_0(t)\,\bx+ B_0(t)\,\bu
\end{equation}
for $\bx\in\R^n$ and $\bu\in\R^m$,
the quadratic form (or supply rate)
\begin{equation}\label{1.defQ}
 \mQ(t,\bx,\bu):=\frac{1}{2}\left(\langle\,\bx, G_0(t)\,\bx\,\rangle
 +2\,\langle\,\bx,g_0(t)\,\bu\,\rangle+\langle\,\bu, R_0(t)\,\bu\,\rangle\right)\,,
\end{equation}
and a point $\bx_0\in\R^n$. We represent by $\mP_{\bx_0}$ the set of pairs
$(\bx,\bu)\colon[0,\infty)\to\R^n\!\times\R^m$ of measurable functions
satisfying \eqref{1.control} with $\bx(0)=\bx_0$, and
consider the problem of minimizing the quadratic functional
\begin{equation}\label{1.defI}
 \mI_{\bx_0}\colon\mP_{\bx_0}\to\R\cup\{\pm\infty\}\,,
 \quad(\bx,\bu)\mapsto \int_0^{\infty} \mQ(t,\bx(t),\bu(t))\,dt\,.
\end{equation}
The functions $A_0$, $B_0$, $G_0$, $g_0$, and $R_0$
are assumed to be bounded and uniformly
continuous on $\R$, with values in the sets of real matrices
of the appropriate dimensions; $G_0$ and
$R_0$ are symmetric, with $R_0(t)\ge\rho I_m$ for a common $\rho>0$ and all $t\in\R$;
and $\langle\,\cdot,\cdot\,\rangle$ represents the Euclidean
inner product in $\R^n$ or $\R^m$. A pair $(\bx,\bu)\in\mP_{\bx_0}$ is
{\em admissible\/} for $\mI_{\bx_0}$
if $(\bx,\bu)\in L^2([0,\infty),\R^n)\times L^2([0,\infty,\R^m)$. That is,
$\bu\colon[0,\infty)\to\R^m$ belongs to $L^2([0,\infty),\R^m)$,
$\bx\colon[0,\infty)\to\R^n$ solves \eqref{1.control} for this control with $\bx(0)=\bx_0$, and
$\bx$ belongs to $L^2([0,\infty),\R^n)$. In particular,
$\mI_{\bx_0}(\bx,\bu)\in\R$ if the pair $(\bx,\bu)$ is admissible.
\par
The questions we will consider are classical in control theory:
the existence of admissible pairs
and, if so, the possibility of minimizing the functional
$\mI_{\bx_0}(\bx,\bu)$ evaluated on the set of these pairs.
The existence of a finite minimum
is not a trivial question even if admissible pairs exist:
since no assumption is made on the sign of $\mQ$,
the infimum can be $-\infty$.
\par
This infinite horizon problem was considered for $T$-periodic coefficients
$A_0,B_0$, $G_0,g_0$ and $R_0$ by Yakubovich in \cite{yaku3,yaku4},
where he explains the origin of the problem and summarizes the
results previously known, providing numerous references. Under the hypothesis
of existence of at least an admissible pair for any $\bx_0\in\R^n$,
it is proved in \cite{yaku3,yaku4} the equivalence between the solvability of
the minimization problem for any $\bx_0\in\R^n$ and other seven conditions
which are
formulated in terms of the properties of a $2n$-dimensional periodic
linear Hamiltonian system which is provided by the coefficients of the minimization
problem:
\begin{equation}\label{1.hamil}
 \left[\!\begin{array}{c}\bx\\\by\end{array}\!\right]'
 =H_0(t)\,\left[\!\begin{array}{c}\bx\\\by\end{array}\!\right]
\end{equation}
with
\[
 H_0:=\left[\begin{array}{cc} A_0-B_0\,R_0^{-1}g_0^T&\;
                        B_0\,R_0^{-1}B_0^T\\
                   G_0-g_0\,R_0^{-1}g_0^T &\; -A_0^T\!+g_0\,R_0^{-1}B_0^T
                   \end{array}\right].
\]
Among all these equivalences, one of the most meaningful reads as follows:
there exists a unique admissible pair providing
the minimum value for $\mI_{\bx_0}$ for any $\bx_0\in\R^n$ if and only,
in Yakubovich's words, the {\em frequency condition\/} and the
{\em nonoscillation condition\/} are satisfied. That is,
if \eqref{1.hamil}
admits exponential dichotomy on $\R$ and, in addition,
the Lagrange plane $l^+$ composed by those initial data $\lsm\bx_0\\\by_0\rsm$ giving rise
to a solution bounded at $+\infty$ admits a basis whose vectors are the columns
of a matrix $\lsm I_n \\M^+\rsm$. Here, $I_n$ is the identity $n\times n$ matrix, and
$M^+$ turns out to be a symmetric matrix (since $l^+$ is a Lagrange plane).
In addition, the minimizing pair $(\wit\bx,\wit\bu)$ can be obtained from the
solution $\lsm \wit\bx(t)\\\wit\by(t)\rsm$ of \eqref{1.hamil} with initial data
$\lsm \bx_0\\M^+\bx_0\rsm$ via the feedback rule
\begin{equation}\label{1.frule}
 \wit\bu(t)=R_0^{-1}(t)\,B_0^T(t)\,\wit\by(t)- R_0^{-1}(t)\,g_0^T(t)\,\wit\bx(t)\,,
\end{equation}
and the value of the minimum is $\mI_{\bx_0}(\wit\bx,\wit\bu)=-(1/2)\,\bx_0^TM^+\bx_0$.
\par
In this paper we go deeper in the analysis of these problems:
we will establish conditions including
the existence of exponential dichotomy under which
it is possible to characterize the set of those $\bx_0\in\R^n$
for which there exist admissible pairs, in terms of a relation
between $\bx_0$ and $l^+$; we will check that for $\bx_0$ in this set,
the minimum is finite; and we will determine the value
of the minimum as well as an admissible pair on which it is attained.
\par
But we will not limit ourselves to the periodic case.
Yakubovich Frequency Theorem was later extended to the general nonautonomous case
of bounded and uniformly continuous coefficients: in Fabbri {\em et al.}
\cite{fajn4} six equivalences where proved in the case of recurrent coefficients;
in Johnson and N\'{u}\~{n}ez \cite{jonu2} the theorem was proved in the general
(non necessarily recurrent) case; and in Chapter 7 of Johnson {\em et al.} \cite{jonnf}
the list of eight equivalences
was completed, adding two more ones related to the rotation number.
When dealing with this general case, the problem is analyzed by including
it in a family of problems of the same type, by means of
the so called {\em hull\/} or {\em Bebutov construction\/}, which we will recall
in Section \ref{2.sechull}. This procedure provides the following families of
control systems and functionals:
\begin{align}
 \bx'=&\,A(\wt)\,\bx+B(\wt)\,\bu\,,\label{1.controlw}\\
 \mQ_\w(t,\bx,\bu):=&\,\Frac{1}{2}\left(\langle\,\bx, G(\wt)\,\bx\,\rangle
 +2\langle\,\bx,g(\wt)\,\bu\,\rangle+\langle\,\bu, R(\wt)\,\bu\,\rangle\right),\label{1.defQw}\\
 \mI_{\bx_0,\w}\colon\mP_{\bx_0,\w}&\to\R\cup\{\pm\infty\}\,,\quad
 (\bx,\bu)\mapsto\int_0^{\infty} \mQ_\w(t,\bx(t),\bu(t))\,dt\label{1.defIw}
\end{align}
for $\w\in\W$ and $\bx_0\in\R^n$.
Here, $\W$ is a compact metric space admitting a continuous flow
$\sigma\colon\R\times\W\to\W,\,(t,\w)\mapsto\sigma(t,\w)=:\wt$;
$A$, $B$, $G$, $g$, and $R$
are bounded and uniformly continuous matrix-valued functions on $\W$;
$G$ and $R$ are symmetric with $R>0$ (which ensures that $R$ is positively
bounded from below, since $\W$ is compact); and $\mP_{\bx_0,\w}$ is the set of
measurable pairs $(\bx,\bu)\colon[0,\infty)\to\R^n\!\times\R^m$ solving \eqref{1.controlw}
for $\w$ with $\bx(0)=\bx_0$. The admissible pairs are defined
for each $\w\in\W$ and $\bx_0\in\R^n$ as for the single problem \eqref{1.control}, and
their existence is guaranteed by the following hypothesis (see e.g.~Proposition
7.4 of \cite{jonnf}):
\begin{itemize}
\item[\hypertarget{H}{H}]
There exists a continuous $m\times n$ matrix-valued function
$K_0\colon\W\to\M_{m\times n}(\R)$ such that the family of linear systems
\[
 \bx'=(A(\wt)+ B(\wt)\,K_0(\wt))\,\bx\,,\qquad \w\in\W\,,
\]
is of Hurwitz type at $+\infty$.
\end{itemize}
The definition of the Hurwitz character of the family, related to the concept of
 exponential dichotomy, is given in Section \ref{sec2}.
Under this condition, the equivalences are formulated in \cite{fajn4} in
terms of the properties of the family of linear Hamiltonian systems
\begin{equation}\label{1.hamilw}
 \left[\!\begin{array}{c}\bx\\\by\end{array}\!\right]'=
 H(\wt)\,\left[\!\begin{array}{c}\bx\\\by\end{array}\!\right],\qquad \w\in\W
\end{equation}
given by
\begin{equation}\label{1.defH}
 H:=\left[\begin{array}{cc} A-B\,R^{-1}g^T&\;
                        B\,R^{-1}B^T\\
                   G-g\,R^{-1}g^T &\; -A^T\!+g\,R^{-1}B^T
                   \end{array}\right].
\end{equation}
We will use the notation \eqref{1.hamilw}$_\w$ to refer to the
system of the family corresponding to the element $\w$ of $\W$,
and we will make the same with the remaining equations defined
along the orbits of $\W$.
The results of \cite{fajn4} and \cite{jonu2} show, in particular, the
equivalence of the following situations if \hyperlink{H}{H} holds:
\begin{itemize}
\item[(1)]
The family of linear Hamiltonian systems
\eqref{1.hamilw} admits an exponential dichotomy
over $\W$ and the (symmetric) Weyl matrix-valued function $M^+$ globally exists;
that is, each one of the systems of the family
admits an exponential dichotomy on $\R$ and
for any $\w\in\W$ the Lagrange plane $l^+(\w)$ of the solutions
bounded at $+\infty$ admits as basis the column vectors of a
matrix $\lsm I_n\\M^+(\w)\rsm$ (see Section \ref{sec2}).
\item[(2)] The minimization
problem is solvable for each $\w\in\W$ and $\bx_0\in\R^n$.
\end{itemize}
In addition, in this case
the minimizing pair $(\wit\bx_\w,\wit\bu_\w)$ comes
from the solution $\lsm\wit\bx_\w(t)\\\wit\by_\w(t)\rsm$
of \eqref{1.hamilw}$_\w$ with initial data
$\lsm \bx_0\\M^+(\w)\,\bx_0\rsm$ via the feedback rule
\begin{equation}\label{1.frulew}
 \wit\bu_\w(t)= R^{-1}(\wt)\,B^T(\wt)\,\wit\by_\w(t)- R^{-1}(\wt)\,g^T(\wt)\,\wit\bx_\w(t)\,,
\end{equation}
and the value of the minimum is
$\mI_{\bx_0,\w}(\wit\bx_\w,\wit\bu_\w)=-(1/2)\,\bx_0^TM^+(\w)\,\bx_0$.
And, in fact, \hyperlink{H}{H} holds when there is exponential dichotomy and
$M^+$ globally exists. (We point out that
the rule \eqref{1.frulew}$_\w$ provides a pair
\lq\lq state, control\rq\rq~solving \eqref{1.controlw}$_\w$
whenever we have a solution of \eqref{1.hamilw}$_\w$.)
\par
Among the remaining equivalences, we want to call attention to another
one, formulated in terms of the rotation number of the family \eqref{1.hamilw},
and which holds when $\W$ admits an ergodic measure $m$ with total support. If so,
and always under hypothesis \hyperlink{H}{H}, the previous situation is equivalent
to
\begin{itemize}
\item[(3)] the rotation number of the family \eqref{1.hamilw} for $m$ is zero.
\end{itemize}
\par
Now we will formulate our main result. It is fundamental to note that
hypothesis \hyperlink{H}{H} is not
in force: otherwise the assumptions of the theorem would imply the
global solvability of the family of minimization problems.
Recall that we have represented
by $l^+(\w)$ the Lagrange plane of the solutions bounded at $+\infty$
in the case of exponential dichotomy over $\W$ of the family \eqref{1.hamilw}.
And recall also that the conditions assumed on $A,B,G,g$ and
$R$ (described after \eqref{1.defIw}) are in force.
\begin{teor}\label{1.teormain}
Let us assume that $\W$ is minimal, that there exists $\w_0\in\W$ such that the
$n\times m$ matrix $B(\w_0)$ has full rank, that the family of systems
\eqref{1.hamilw} admits exponential dichotomy over $\W$, and that
there exists a $\sigma$-ergodic measure on $\W$
for which the corresponding rotation number is zero. Let $l^+(\w)$
be the Lagrange plane of the solutions of \eqref{1.controlw}$_\w$
bounded at $+\infty$. And let us fix $\w\in\W$ and $\bx_0\in\R^n$. Then,
\begin{itemize}
\item[\rm(i)] there exist admissible pairs $(\bx,\bu)$ for the functional
$\mI_{\bx_0,\w}$ given by \eqref{1.defIw}$_\w$
if and only if there exists $\by_0\in\R^n$ such that
$\lsm\bx_0\\\by_0\rsm\in l^+(\w)$.
\item[\rm(ii)] In this case, the infimum of $\mI_{\bx_0,\w}$ is finite.
In addition, if the columns of the
$2n\times n$ matrix $\lsm L_{\w,1}\\L_{\w,2}\rsm$ are a basis of $l^+(\w)$ and
$\lsm\bx_0\\\by_0\rsm=\lsm L_{\w,1}\,\bc\\L_{\w,2}\,\bc\rsm$ for a vector $\bc\in\R^n$, then
the infimum is given by $-(1/2)\,\bc^T L^T_{\w,2}\,L_{\w,1}\,\bc$,
and a minimizing pair $(\wit\bx_\w,\wit\bu_\w)\in\mP_{\bx_0,\w}$ is obtained from the solution
$\lsm\wit\bx_\w(t)\\\wit\by_\w(t)\rsm$ of \eqref{1.hamilw}$_\w$ with initial data
$\lsm\bx_0\\\by_0\rsm$ via the feedback rule \eqref{1.frulew}$_\w$.
\item[\rm(iii)] If the situation in {\rm (i)} does not hold, then $\mI_{\bx_0}(\bx,\bu)=\infty$ for
any pair $(\bx,\bu)\in\mP_{\bx_0,\w}$.
\end{itemize}
\end{teor}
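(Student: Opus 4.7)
First I would treat the \emph{if} direction of (i) and the value in (ii) together. Given $\lsm\bx_0\\\by_0\rsm\in l^+(\w)$, let $\lsm\wit\bx_\w(t)\\\wit\by_\w(t)\rsm$ be the solution of \eqref{1.hamilw}$_\w$ through this point; by exponential dichotomy, both components decay exponentially and so lie in $L^2$. Defining $\wit\bu_\w$ by \eqref{1.frulew}$_\w$ gives $\wit\bu_\w\in L^2$, and the feedback identity $B^T\wit\by_\w=R\wit\bu_\w+g^T\wit\bx_\w$ reduces the first Hamilton equation to $\wit\bx_\w'=A(\wt)\wit\bx_\w+B(\wt)\wit\bu_\w$, so $(\wit\bx_\w,\wit\bu_\w)\in\mP_{\bx_0,\w}$ is admissible. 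A direct computation using the Hamilton equations yields $\tfrac{d}{dt}\langle\wit\bx_\w,\wit\by_\w\rangle=2\mQ_\w(t,\wit\bx_\w,\wit\bu_\w)$; integrating on $[0,\infty)$ and using exponential decay gives $\mI_{\bx_0,\w}(\wit\bx_\w,\wit\bu_\w)=-\tfrac12\langle\bx_0,\by_0\rangle$, which equals the claimed $-\tfrac12\bc^T L_{\w,2}^T L_{\w,1}\bc$ when $\bx_0=L_{\w,1}\bc$ and $\by_0=L_{\w,2}\bc$. The Lagrange property of $l^+(\w)$ (forcing $L_{\w,2}(\ker L_{\w,1})\perp\pi_1(l^+(\w))$) ensures the value is independent of the choice of $\bc$.

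Next, to establish optimality, I would compare $(\wit\bx_\w,\wit\bu_\w)$ with an arbitrary admissible $(\bx,\bu)\in\mP_{\bx_0,\w}$. Setting $(\bz,\bv):=(\bx-\wit\bx_\w,\bu-\wit\bu_\w)$, which is itself an admissible pair in $\mP_{0,\w}$, a bilinear expansion of $\mQ_\w(\bx,\bu)$ around $(\wit\bx_\w,\wit\bu_\w)$, with cross terms rewritten via the feedback identity and the relation $\wit\by_\w'=G(\wt)\wit\bx_\w+g(\wt)\wit\bu_\w-A^T(\wt)\wit\by_\w$ obtained from the second Hamilton equation, gives the pointwise identity
\[
2\mQ_\w(t,\bx,\bu)=2\mQ_\w(t,\wit\bx_\w,\wit\bu_\w)+2\mQ_\w(t,\bz,\bv)+2\tfrac{d}{dt}\langle\wit\by_\w(t),\bz(t)\rangle.
\]
On integrating, the boundary term at $0$ vanishes (because $\bz(0)=0$) and that at $+\infty$ vanishes as well (since $\wit\by_\w$ decays exponentially while admissibility puts $\bz\in H^1([0,\infty),\R^n)$ and therefore $\bz(t)\to 0$). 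Hence
\[
\mI_{\bx_0,\w}(\bx,\bu)=\mI_{\bx_0,\w}(\wit\bx_\w,\wit\bu_\w)+\mI_{0,\w}(\bz,\bv),
\]
and (ii) reduces to the single non-negativity assertion $\mI_{0,\w}(\bz,\bv)\geq 0$ for every admissible $(\bz,\bv)\in\mP_{0,\w}$.

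This non-negativity is the main obstacle: it is precisely where all four hypotheses — exponential dichotomy, zero rotation number, minimality of $\W$, and full rank of $B(\w_0)$ — intervene together, since the absence of \hyperlink{H}{H} prevents a direct appeal to a global Weyl function as in the classical Yakubovich Frequency Theorem. My plan would be to combine a finite-horizon Riccati approximation with the theory developed in \cite{fajn4,jonu2,jonnf}: exponential dichotomy controls the finite-horizon Riccati matrices as $T\to\infty$; on the invariant subset of $\W$ where $l^+(\w)$ projects onto $\R^n$, the limit matches the Weyl function $M^+(\w)$ and the classical Yakubovich identity gives $\mI_{0,\w}(\bz,\bv)\geq 0$; the null-rotation-number hypothesis together with minimality then extends the non-negativity across all of $\W$ via a density/continuity argument, while the full-rank hypothesis on $B(\w_0)$ rules out degenerate $\bv$-directions in the extension.

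Finally, for the \emph{only if} direction of (i) and for (iii), I would dualise the first step via a costate construction. Given an admissible $(\bx,\bu)\in\mP_{\bx_0,\w}$, the full-rank hypothesis on $B(\w_0)$ combined with minimality allows one to solve $B^T\by=R\bu+g^T\bx$ along the orbit for a costate $\by$, and the Hamiltonian exponential dichotomy then produces an $L^2$ representative of $\by$ for which $(\bx,\by)$ solves \eqref{1.hamilw}$_\w$; consequently $\lsm\bx_0\\\by(0)\rsm\in l^+(\w)$, whence $\bx_0\in\pi_1(l^+(\w))$, giving the \emph{only if} direction of (i). For (iii), the non-negativity of the previous paragraph combined with the pointwise decomposition $2\mQ_\w(t,\bx,\bu)=\bx^T(G-gR^{-1}g^T)\bx+\langle\bu+R^{-1}g^T\bx,R(\bu+R^{-1}g^T\bx)\rangle$ and the coercivity $R\geq\rho I_m$ force any $(\bx,\bu)\in\mP_{\bx_0,\w}$ with $\mI_{\bx_0,\w}(\bx,\bu)<+\infty$ to be admissible; if $\bx_0\notin\pi_1(l^+(\w))$, this would contradict the conclusion just obtained, so $\mI_{\bx_0,\w}(\bx,\bu)=+\infty$ for every $(\bx,\bu)\in\mP_{\bx_0,\w}$.
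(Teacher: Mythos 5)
Your opening reduction is correct and close in spirit to the paper's: the feedback pair built from a solution through $\lsm\bx_0\\\by_0\rsm\in l^+(\w)$ is admissible, the derivative identity $\frac{d}{dt}\,\langle\wit\bx_\w,\wit\by_\w\rangle=2\,\mQ_\w$ is exactly Lemma~\ref{3.lemaV}, and your completion-of-squares identity reducing optimality to $\mI_{\bcero,\w}(\bz,\bv)\ge 0$ checks out. But the two places where you defer the substance both contain genuine gaps. The fatal one is your plan for the nonnegativity itself: you propose to establish it on ``the invariant subset of $\W$ where $l^+(\w)$ projects onto $\R^n$'' and then spread it over $\W$ by density/continuity. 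Under the hypotheses of the theorem that subset can be \emph{empty}: in the paper's own autonomous Example~\ref{3.exaaut} the matrix $L_{\w,1}=\lsm 1&0\\0&0\rsm$ is singular for every $\w$, so $M^+$ exists nowhere; and if $M^+$ existed on all of $\W$, hypothesis \hyperlink{H}{H} would hold and the problem would be globally solvable --- precisely the situation the theorem is meant to go beyond. (You also give no reason why that set should be invariant, nor what the continuity argument for an improper integral over a half-orbit would be.) The paper's proof perturbs the \emph{problem} rather than the base point: after an orthogonal change of variables making the top $m\times m$ block of $B(\w_0)$ invertible, it augments the control to dimension $n$ via $B_\ep$, $R_\ep$, $g_\ep$ with an $\ep\,I_{n-m}$ block; robustness of exponential dichotomy (Remark~\ref{2.notasED}.2) plus continuity of $\ep\mapsto\alpha_\ep(m)$ and its confinement to a discrete group give $\alpha_\ep(m)=0$ for small $\ep$; Theorem~\ref{3.teorBregular} (full-rank case, proved through null controllability at $\w_0$, minimality, condition \hyperlink{D2}{D2} and uniform weak disconjugacy) then yields global Weyl functions $M_\ep^+$ and solvability for each $\ep>0$; finally monotonicity of $M_\ep^+$ in $\ep$ and Kratz's limit theorem \cite{krat0} give the alternative: $\lim_{\ep\to 0^+}\bx_0^TM_\ep^+(\w)\,\bx_0$ is finite iff $\bx_0\in\pi_1(l^+(\w))$ (with value $\bc^TL_{\w,2}^TL_{\w,1}\,\bc$), and is $-\infty$ otherwise. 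Embedding any admissible $(\bar\bx,\bar\bu)$ as $\big(\bar\bx,\lsm\bar\bu\\\bcero\rsm\big)$ in the $\ep$-problem gives $\mI_{\bx_0,\w}(\bar\bx,\bar\bu)\ge -\frac{1}{2}\,\bx_0^TM_\ep^+(\w)\,\bx_0$, and letting $\ep\to 0^+$ delivers the lower bound in (ii), the only-if half of (i), and (iii) all at once; the nonnegativity you isolate is a byproduct, never proved directly.

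Your treatment of the only-if direction and of (iii) is also incorrect as stated. An arbitrary admissible pair is not a trajectory of the Hamiltonian flow: the algebraic relation $B^T(\wt)\,\by=R(\wt)\,\bu+g^T(\wt)\,\bx$ is pointwise solvable for $\by$ only where $B(\wt)$ has rank $m$ (you assume full rank only at the single point $\w_0$), and even where solvable, $\by$ is determined only up to $\Ker B^T(\wt)$, with nothing forcing any measurable selection to satisfy the costate equation $\by'=(G-g\,R^{-1}g^T)\,\bx+(g\,R^{-1}B^T-A^T)\,\by$; that ODE characterizes the \emph{optimal} pair, not admissible ones, so no $L^2$ costate exists in general and the claimed membership $\lsm\bx_0\\\by(0)\rsm\in l^+(\w)$ does not follow. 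Likewise your coercivity argument for (iii) implicitly needs $G-g\,R^{-1}g^T\ge 0$: completing the square makes only the control part coercive, while the paper makes no sign assumption on $\mQ$, so finite cost does not force $\bx\in L^2$, i.e.\ does not force admissibility. In the paper both conclusions fall out of case (b) of the Kratz alternative: for any admissible pair the bound $\mI_{\bx_0,\w}(\bar\bx,\bar\bu)\ge-\frac{1}{2}\,\bx_0^TM_\ep^+(\w)\,\bx_0\to+\infty$ yields a contradiction, so no admissible pair exists and every pair in $\mP_{\bx_0,\w}$ has infinite cost.
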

\par
Section \ref{sec2} contains the notions and basic properties required to
fully understand the hypotheses and statements of Theorem~\ref{1.teormain},
whose proof is given in Section~\ref{sec3}. In that section we will also analyze the
hypotheses; we will explain how these hypotheses can be formulated for the initial
problem, for which we give a less general version of the main theorem; and we will
show autonomous and nonautonomous scenarios in which admissible pairs do not
always exist.
\section{Preliminaries}\label{sec2}
All the contents of this preliminary section can be found in
Johnson {\em et al.}~\cite{jonnf}, together with a quite exhaustive list of references
for the origin of the results here summarized.
\par
Let us fix some notation. The set
of $d\times m$ matrices with entries in the real line $\R$
is represented by $\M_{d\times m}(\R)$.
As usual, $\R^d:=\M_{d\times 1}(\R)$, and
$A^T$ is the transpose of the matrix $A$. The subset
$\s_d(\R)\subset\M_{d\times d}(\R)$ is composed by the symmetric matrices.
The expressions $M>0$, $M\ge 0$, $M<0$, and $M\le 0$ for $M\in\s_d(\R)$
mean that $M$ is positive definite, positive semidefinite,
negative definite, and negative semidefinite, respectively.
If $M\colon\W\to\s_d(\R)$ is a map, $M>0$ means that $M(\w)>0$ for all
the elements $\w\in\W$, and $M<0$, $M\ge 0$, and $M\le 0$ have the
analogous meaning.
It is also obvious what $M_1>M_2$, $M_1\ge M_2$,
$M_1<M_2$, and $M_1\le M_2$ mean. We represent by $I_d$ and $0_d$ the
identity and zero $d\times d$ matrices, by $\bcero$ the null
vector of $\R^d$ for all $d$, and
by $\n{\cdot}$ the Euclidean norm in $\R^d$.
\par
A real {\em Lagrange plane\/} is an $n$-dimensional
linear subspace of $\R^{2n}$ such that
$[\bx_1^T\,\by_1^T]\,J\,\lsm\bx_2\\\by_2\rsm=0$ for any pair
of its elements $\lsm\bx_1\\\by_1\rsm$ and $\lsm\bx_2\\\by_2\rsm$,
where $J:=\lsm 0_n&-I_n\\I_n&0_n\rsm$.
A Lagrange plane $l$ {\em is represented by\/} $\lsm L_1\\ L_2\rsm$
(which we represent as $l\equiv \lsm L_1\\L_2\rsm$)
if the column vectors of the matrix form a basis of the $n$-dimensional linear
space $l$. In this case $L_2^TL_1=L_1^TL_2$. Note that it can be also represented
by $\lsm I_n\\M\rsm$ if and only if $\det L_1\ne 0$, in which case the matrix
$M=L_2L_1^{-1}$ is symmetric.
\par
The next concepts and properties are basic in
topological dynamics and measure theory.
A ({\em real and continuous})
{\em global flow\/} on a complete metric space $\W$
is a continuous map
$\sigma\colon\R\times\W\to\W,\; (t,\w)\mapsto\sigma(t,\w)$
such that
$\sigma_0=\text{Id}$ and $\sigma_{s+t}=\sigma_t\circ\sigma_s$ for each $s,t\in\R$,
where $\sigma_t(\w)=\sigma(t,\w)$.
The $\sigma$-{\em orbit\/} of a point $\w\in\W$
is the set $\{\sigma_t(\w)\,|\;t\in\R\}$.
A subset $\W_1\subset \W$ is {\em $\sigma$-invariant\/}
if $\sigma_t(\W_1)=\W_1$ for every $t\in\R$.
A $\sigma$-invariant subset $\W_1\subset\W$ is {\em minimal\/} if it is compact
and does not contain properly any other compact $\sigma$-invariant set.
And the continuous flow $(\W,\sigma)$ is
{\em minimal\/} if $\W$ itself is minimal.
\par
Let $m$ be a normalized Borel measure on $\W$; i.e.~a finite
regular measure defined on the Borel subsets of $\W$ and with
$m(\W)=1$. The measure $m$ is {\em $\sigma$-invariant\/}
if $m(\sigma_t(\W_1))=m(\W_1)$ for every Borel subset
$\W_1\subset\W$ and every $t\in\R$. A $\sigma$-invariant measure $m$
is {\em $\sigma$-ergodic\/} if
$m(\W_1)=0$ or $m(\W_1)=1$ for every $\sigma$-invariant subset
$\W_1\subset\W$. A real continuous flow $(\W,\sigma)$
admits at least an ergodic measure whenever $\W$ is compact.
And the {\em topological support\/} of $m$, $\Supp m$, is the complement
of the largest open set $O\subset\W$ for which $m(O)=0$.
We say that $m$ {\em has total support\/} if its topological support is $\W$.
If $\W$ is minimal, then any $\sigma$-ergodic
measure has total support.
\par
In the rest of the paper, $(\W,\sigma)$ will be a minimal continuous
global flow on a compact metric space, and we will
denote $\wt:=\sigma(t,\w)$. We will work with families of linear
systems
of the type \eqref{1.hamilw} depending on continuous matrix-valued functions
$A,B,G,g$ and $R$ with the properties defined in the Introduction.
Since $H\colon\W\to\mathfrak{sp}(n,\R)$, where
\[
 \mathfrak{sp}(n,\R):=\{H\in\M_{2n\times 2n}(\R)\,|\;H^T\!J+JH=0_{2n}\}
\]
for $J=\lsm 0_n&-I_n\\I_n&0_n\rsm$, the systems of the family are
said to be Hamiltonian.
Let $U_H(t,\w)$ denote the fundamental matrix solution of the system
\eqref{1.hamilw}$_\w$ with $U(0,\w)=I_{2n}$.
The family \eqref{1.hamilw} induces
a real continuous global flow
on the linear bundle $\W\times\R^{2n}$,
\begin{equation}\label{2.deftau}
 \tau_H\colon\R\times\W\times\R^{2n}\to\W\times\R^{2n}\,,\quad
 \big(t,\w,\lsm\bx\\\by\rsm\big)\mapsto\big(\wt,U_H(t,\w)\,\lsm\bx\\\by\rsm\big)\,.
\end{equation}
An equivalent assertion can be done for any family of linear systems
\begin{equation}\label{2.linw}
 \bw'=S(\wt)\,\bw\,,\qquad \w\in\W
\end{equation}
for $\bw\in\R^d$ if $S\colon\W\to\M_{d\times d}(\R)$ is continuous.
We represent the corresponding fundamental matrix solution as
$U_S\colon\R\times\W\to\M_{d\times d}(\R)$, and  the flow that it provides as
$\tau_S\colon\R\times\W\times\R^d\to\W\times\R^d$.
\par
In the rest of this section we recall some basic concepts and some
associated properties related to families of the forms~\eqref{2.linw} and
\eqref{1.hamilw}. Some of them are directly related to the
statements of our main result (as in the case of the
{\em exponential dichotomy\/} and the {\em rotation number}),
while other ones are used as tools in its proof, as it happens with
the {\em uniform weak disconjugacy} property).
\par
We begin with the exponential dichotomy of a family of linear systems over $\W$,
which is one of the main hypotheses of our main theorem.
\begin{defi}\label{2.defED}
The family~\eqref{2.linw}
has {\em exponential dichotomy\/}
{\em over\/} $\W$ if there exist
constants $\eta\ge 1$ and $\beta>0$ and a splitting $\W\times
\R^d=L^+\oplus L^-$ of the bundle into the Whitney sum of
two closed subbundles such that
\begin{itemize}
 \item[-] $L^+$ and $L^-$ are invariant
 under the flow $\tau_S$ induced by \eqref{2.linw} on $\W\times\R^d$;
 that is, if $(\w,\bw)$ belongs to $L^+$ (or to $L^-$), so does
 $(\wt,U_S(t,\w)\,\bw)$ for all $t\in\R$ and $\w\in\W$.
 \item[-] $\n{U_S(t,\w)\,\bw} \le \eta\,e^{-\beta t}\n{\bw}\quad$ for
   every $t\ge 0$ and $(\w,\bw)\in L^+$.
 \item[-] $\n{U_S(t,\w)\,\bw} \le \eta\,e^{\beta t}\n{\bw}\quad\;\;\,$
   for every $t\le 0$ and $(\w,\bw)\in L^-$.
\end{itemize}
In the case that $L^+=\R^d$, the family \eqref{2.linw} is
{\em of Hurwitz type at} $+\infty$.
\end{defi}
In general, we will omit the words \lq\lq over $\W$" when the family~\eqref{2.linw}
has exponential dichotomy, since no confusion arises.
Let us summarize in the next list of remarks some
well-known fundamental properties
satisfied by a family of linear systems which has exponential dichotomy.
\begin{notas}\label{2.notasED}
1.~If $\W$ is minimal (as we assume in Theorem~\ref{1.teormain}),
the exponential dichotomy of
the family \eqref{2.linw} over $\W$ is equivalent to the exponential dichotomy
on $\R$ of anyone of its systems (see e.g.~\cite{copp1} for the definition of this
classical concept). This property is proved in \cite{sase2}
(Theorem 2 and Section 3). In addition, the exponential dichotomy
of the family is equivalent to the unboundedness of any nontrivial
solution of anyone of the systems, as proved in
Theorem 10.2 of \cite{selg}.
\par
2.~Suppose that the family \eqref{2.linw} has
exponential dichotomy. There exists $\delta>0$ such that
if $T\colon\W\to\M_{d\times d}(\R)$ is a continuous map and
$\max_{\w\in\W}\n{S(\w)-T(\w)}<\delta$ (where $\n{\cdot}$ is
representing the Euclidean operator norm), then the family
$\bw'=T(\wt)\,\bw,\;\w\in\W$ has exponential dichotomy.
This well-known property of robustness is a consequence of the Sacker
and Sell Spectral Theorem (Theorem 6 of \cite{sase5}).
\par
3.~Assume that the family of linear systems is of Hamiltonian type, as in the case
of \eqref{1.hamilw}, and that it has exponential dichotomy. Then the sections
\begin{equation}\label{2.defl}
 l^\pm(\w):=\big\{\lsm\bx\\\by\rsm\in\R^{2n}\,\big|\;\big(\w,\lsm\bx\\\by\rsm\big)\in L^\pm\big\}
\end{equation}
are real Lagrange planes.
In addition,
\[
\begin{split}
 l^\pm(\w)&=\{\bz\in\R^{2n}\,|\;\lim_{t\to\pm\infty}\n{U_H(t,\w)\,\lsm\bx\\\by\rsm}=\bcero\}\\
 &=\{\bz\in\R^{2n}\,|\;\sup_{\pm t\in[0,\infty)}\n{U_H(t,\w)\,\lsm\bx\\\by\rsm}<\infty\}\,.
\end{split}
\]
These properties are proved, for example, in Section 1.4 of \cite{jonnf}.
\par
4.~Also in the Hamiltonian case, assume that for all $\w\in\W$ the Lagrange plane
$l^+(\w)$ can be represented by the matrix $\lsm I_n\\M^+(\w)\rsm$.
Or, equivalently, that for all $\w\in\W$ the Lagrange plane
$l^+(\w)$ can be represented by a matrix $\lsm L_{\w,1}^+\\L_{\w,2}^+\rsm$
with $\det L_{\w,1}^+\ne 0$ (so that $M^+(\w)=L_{\w,2}\,L_{\w,1}^{-1}$).
In this case $M^+\colon\W\to\s_n(\R)$ is a continuous matrix-valued
function, and it is known as one of the {\em Weyl functions}
for \eqref{1.hamilw}. In this situation, we say that
the Weyl function $M^+$~{\em globally exists}.
(The other Weyl function is $M^-$, associated
to the subbundle $L^-$, and it satisfies the same properties if it exists.)
\end{notas}
The other fundamental hypotheses of our main theorem refers to the value
of the rotation number for the family \eqref{1.hamilw}, whose definition we recall now.
This object depends on a given $\sigma$-ergodic measure on $\W$. Among
the many equivalent definitions
for this quantity, we give one which extends that which is possibly the best known
in dimension 2 (see \cite{jomo} and \cite{gijo}). We write as $U_H(t,\w)=\lsm
U_1(t,\w)&U_3(t,\w)\\ U_2(t,\w)&U_4(t,\w)\rsm$ the matrix-valued
solution of \eqref{1.hamilw} with $U_H(0,\w)=I_{2n}$. And $\arg\colon\C\to\R$
is the continuous branch of the argument of a complex number for which
$\arg 1=0$.
\begin{defi}\label{2.defrot}
Let $m$ be a $\sigma$-ergodic measure on $\W$.
The {\em rotation number $\alpha(m)$ of the family\/}
of linear Hamiltonian systems \eqref{1.hamilw}
{\em with respect to $m$} is the value of
\[
 \lim_{t\to\infty}\frac{1}{t}\:\arg\det (U_1(t,\w)-iU_2(t,\w))
\]
for $m$-a.a.~$\w\in\W$.
\end{defi}
It is proved in \cite{nono} that the limits exist and take the same finite
value for $m$-a.a.~$\w\in\W$. The analysis of $\alpha(m)$
made in \cite{nono} is completed in \cite{fajn1} and
in Chapter 2 of \cite{jonnf}, where the interested reader may find
many other equivalent definitions and a exhaustive description of the properties
of the rotation number.
\par
Now we introduce the concept of uniform weak disconjugacy.
\begin{defi}
{\rm \label{2.defUWD}
The family of linear Hamiltonian systems \eqref{1.hamilw} is
{\em uniformly weakly disconjugate on\/} $[0,\infty)$
if there exists $t_0\ge 0$ independent of $\w$ such that for every
nonzero solution $\bz(t,\w)=\lsm \bz_1(t,\w)\\\bz_2(t,\w)\rsm$ of the
systems corresponding to $\w$
with $\bz_1(0,\w)=\bcero$, there holds $\bz_1(t,\w)\ne\bcero$ for all
$t>t_0$.}
\end{defi}
In the next remarks some results proved in \cite{jnno}, \cite{fajn5} and
in Chapter 5 of \cite{jonnf} are summarized.
Note that the fact that the submatrix $B\,R^{-1}B^T$ of $H$
(see \eqref{1.defH}) is positive
semidefinite is fundamental in what follows.
\begin{notas}\label{2.notasUWD}
1.~If the family \eqref{1.hamilw} is uniformly weakly disconjugate, then there exist
{\em uniform principal solutions at $\pm\infty$},
$\lsm L_1^\pm(t,\w)\\L_2^\pm(t,\w)\rsm$ (see Theorem 5.2 of \cite{jnno} or
Theorem 5.26 of \cite{jonnf}).
They are real $2n\times n$ matrix-valued solutions of \eqref{1.hamilw} satisfying the
following properties: for all $t\in\R$ and $\w\in\W$,
the matrices $L_1^\pm(t,\w)$ are nonsingular
and $\lsm L_1^\pm(t,\w)\\L_2^\pm(t,\w)\rsm$ represent Lagrange planes;
and for all $\w\in\W$,
\[
 \lim_{\pm t\to\infty}\left(\int_0^t
 (L_1^\pm)^{-1}(s,\w)\,H_3(\ws)\, ((L_1^\pm)^T)^{-1}(s,\w)\,ds\right)^{-1}\!=0_n\,.
\]
In addition, if the matrix-valued functions
$\lsm L_1^\pm(t,\w)\\L_2^\pm(t,\w)\rsm$ are uniform principal solutions
at $\pm\infty$, then the real matrix-valued functions
$N^\pm\colon\W\to\s_n(\R)\,,\;\w\mapsto N^\pm(\w):=
L_2^\pm(0,\w)\,(L_1^\pm(0,\w))^{-1}$ are unique. The
functions $N^\pm$  are called {\em principal functions of\/} \eqref{1.hamilw}.
\par
The interested reader can find in Chapter 5 of \cite{jonnf} a careful
description of the uniform principal solutions and the principal
functions for families of linear Hamiltonian systems of the type \eqref{1.hamilw}.
A more general theory concerning principal solutions for less restrictive
assumptions is developed in \cite{sesi12, sesi14} and references therein.
And a recent in-deep analysis of the corresponding Riccati equations (which the
principal functions solve) can be found in \cite{sept4}.
\par
2.~According to Theorem 5.2 of \cite{jnno} or to Theorem 5.17 of \cite{jonnf},
the uniform weak disconjugacy of the family \eqref{1.hamilw} ensures
the validity of the condition
\begin{itemize}
\item[\bf D2] \hypertarget{D2} For all $\w\in\W$ and for any nonzero solution
$\lsm \bx(t,\w)\\\by(t,\w)\rsm$ of the system \eqref{1.hamilw}$_\w$
with $\bx(0,\w)=\bcero$, the vector $\bx(t,\w)$
does not vanish identically on $[0,\infty)$.
\end{itemize}
It also ensures that the rotation number $\alpha(m)$ with respect to any ergodic
measure $m$ on $\W$ vanishes: see Theorem 2 of \cite{fajn5} or Theorem 5.67 of \cite{jonnf}.
Conversely, if there exists a $\sigma$-ergodic measure on $\W$ with total support
(which is always the case if $\W$ is minimal)
for which the rotation number is zero and \hyperlink{D2}{D2} holds, then
the family \eqref{1.hamilw} is uniformly weakly disconjugate. This assertion
follows from Theorems 5.67 and 5.17 of \cite{jonnf}.
\end{notas}
\subsection{The hull construction}\label{2.sechull}
Let us complete Section \ref{sec2} by explaining briefly how we obtain the family of
problems given by \eqref{1.controlw}, \eqref{1.defQw} and \eqref{1.defIw}
from the initial
one, given by \eqref{1.control}, \eqref{1.defQ} and \eqref{1.defI}.
\par
Let us denote $C_0:=(A_0,B_0,G_0,g_0,R_0)$, so that
\begin{equation}\label{2.defC0}
 C_0\colon\R\to\M_{n\times n}(\R)\times \M_{n\times m}(\R)
 \times \M_{n\times n}(\R)\times \M_{n\times m}(\R)\times \M_{m\times m}(\R)\,,
\end{equation}
and define $\W$ as its {\em hull\/}: that is, the closure with respect to
the compact-open topology of $\R$ of the set $\{C_s\,|\;s\in\R\}$, where
$C_s(t):=C_0(t+s)$.
It turns out that $\W$ is a compact metric space and that the time translation map
\[
 \sigma\colon\R\times\W\to\W\,,\quad(t,\w)\mapsto\wt\,,
\]
where $(\wt)(s)=\w(t+s)$ defines a continuous flow on $\W$.
The proofs of these assertions can be found in Sell \cite{sell2}.
\par
Note that any element $\w\in\W$ can be written as
$\w=(\w_1,\w_2,\w_3,\w_4,\w_5)$, and that
$\wt=(\w_1{\cdot}t,\w_2{\cdot}t,\w_3{\cdot}t,\w_4{\cdot}t,\w_5{\cdot}t)$
with $(\w_i{\cdot}t)(s)=\w_i(t+s)$ for $i=1,\ldots,5$. We define
\[
 A\colon\W\to\M_{n\times n}(\R)\,,\quad(\w_1,\w_2,\w_3,\w_4,\w_5)\mapsto\w_1(0)\,,
\]
and proceed in a similar way to define $B\colon\W\to\M_{n\times m}(\R)$,
$G\colon\W\to\M_{n\times n}(\R)$, $g\colon\W\to\M_{n\times m}(\R)$, and
$R\colon\W\to\M_{n\times n}(\R)$. It is obvious that
$A,B,G,g$ and $R$ are continuous maps on $\W$.
In addition, if $\wit\w=C_0\in\W$, then $A(\wit\w{\cdot}t)=
(\wit\w_1{\cdot}t)(0)=\wit\w_1(t)=A_0(t)$, and analogous
equalities hold for $B,G,g$ and $R$. This means that the family of problems
given by \eqref{1.control}, \eqref{1.defQ} and \eqref{1.defI} for $\w\in\W$
includes the initial one, which corresponds to the element $C_0$ of $\W$.
Note that $G$ and $R$ are symmetric, and that $R>0$.
\par
Additional recurrence properties on $C_0$ ensure that the flow on
$\W$ is minimal, which is one of the hypotheses of Theorem \ref{1.teormain}.
This is for instance the case when $C_0$ is almost periodic or
almost automorphic (see e.g.~\cite{fink} and \cite{shyi4} for the definitions).
Note that in the minimal case, $\W$ is the hull of
any of its elements.
\section{Proof of Theorem \ref{1.teormain}}\label{sec3}
Let $(\W,\sigma)$ be a real continuous global flow on a
compact metric space, and let us denote $\wt:=\sigma(t,\w)$.
We consider the family of control systems \eqref{1.controlw}
and functionals \eqref{1.defIw} under the conditions on the coefficients
$A,B,G,g$ and $R$ described in the Introduction (which are guaranteed
under the initial conditions on $A_0,B_0,G_0,g_0$ and $R_0$, as explained in
Section \ref{2.sechull}), and consider the minimization
problem there explained. We also consider the family
of linear Hamiltonian systems defined by \eqref{1.hamilw} and \eqref{1.defH}.
During this whole section, we will be working under the hypotheses of Theorem
\ref{1.teormain}, namely
\begin{hipos}\label{3.hipos}
$\W$ is minimal, there exists $\w_0\in\W$ such that the $n\times m$ matrix $B(\w_0)$
has full rank,
the family of linear Hamiltonian systems \eqref{1.hamilw}
has exponential dichotomy
over $\W$, and there exists a $\sigma$-ergodic measure $m$ on $\W$
for which the rotation number $\alpha(m)$ is zero.
\end{hipos}
We will analyze later on the scope of these hypotheses. Let us begin
with a result which includes the assertions of Theorem \ref{1.teormain}
in the simplest situation, that of $m\ge n$. This result will play a
fundamental role in the general proof.
\begin{teor}\label{3.teorBregular}
Assume that Hypotheses~\ref{3.hipos} hold and that
$m\ge n$. Then,
\begin{itemize}
\item[\rm(i)] the family \eqref{1.hamilw} is uniformly
weakly disconjugate.
\item[\rm(ii)] The Weyl functions $M^\pm\colon\W\to\s_n(\R)$
associated to the exponential dichotomy of the family \eqref{1.hamilw}
globally exist, and they agree with the principal functions $N^\pm$.
\item[\rm(iii)] There exist admissible pairs for the functional
$\mI_{\bx_0,\w}$ given by \eqref{1.defIw}$_\w$
for all $(\bx_0,\w)\in\R^n\!\times\W$,
and the corresponding minimization problem
is solvable. In addition, the (unique)
minimizing pair $(\wit\bx_\w(t),\wit\bu_\w(t))\in\mP_{\bx_0,\w}$
comes from the solution $\lsm\wit\bx_\w(t)\\\wit\by_\w(t)\rsm$
of \eqref{1.hamilw}$_\w$ with initial data
$\lsm \bx_0\\M^+(\w)\,\bx_0\rsm$ via the feedback rule \eqref{1.frulew}$_\w$,
and the value of the minimum is
$\mI_{\bx_0,\w}(\wit\bx_\w,\wit\bu_\w)=-(1/2)\,\bx_0^TM^+(\w)\,\bx_0$.
\end{itemize}
\end{teor}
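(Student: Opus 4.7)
The plan is to handle the three parts in the order (i), (ii), (iii), leaning on the preparatory material of Section \ref{sec2} as the main engine. First I would establish (i) by verifying condition \hyperlink{D2}{D2} and invoking the characterization of uniform weak disconjugacy in Remark \ref{2.notasUWD}.2; then (ii) by combining uniform weak disconjugacy with the exponential dichotomy through the theory of principal solutions recalled in Remark \ref{2.notasUWD}.1; and finally (iii) by using the closed-loop family associated with $M^+$ to verify hypothesis \hyperlink{H}{H}, so that the equivalence (1)$\Leftrightarrow$(2) recalled in the Introduction becomes applicable.

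For (i), minimality of $\W$ makes every $\sigma$-ergodic measure of total support, so by Remark \ref{2.notasUWD}.2 it suffices to verify \hyperlink{D2}{D2}. Suppose, toward a contradiction, that for some $\w\in\W$ there is a nonzero solution $\lsm\bx(t,\w)\\\by(t,\w)\rsm$ of \eqref{1.hamilw}$_\w$ with $\bx(0,\w)=\bcero$ and $\bx(\cdot,\w)\equiv\bcero$ on $[0,\infty)$. Reading the first block of \eqref{1.hamilw} together with \eqref{1.defH} yields $B(\wt)\,R^{-1}(\wt)\,B^T(\wt)\,\by(t,\w)=\bcero$ for every $t\ge 0$. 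The set $\mU:=\{\w'\in\W\mid B(\w')\,B^T(\w')\text{ is invertible}\}$ is open and nonempty since it contains $\w_0$, so by minimality the forward orbit of $\w$ enters $\mU$ on a nontrivial open interval $I\subset(0,\infty)$. On $I$ the matrix $B(\ws)\,R^{-1}(\ws)\,B^T(\ws)$ is positive definite, whence $\by(s,\w)=\bcero$ for $s\in I$; combined with $\bx\equiv\bcero$ this forces the full solution to vanish on $I$, hence on $\R$ by uniqueness, contradicting nontriviality.

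For (ii), part (i) together with Remark \ref{2.notasUWD}.1 provides uniform principal solutions $\lsm L_1^\pm(t,\w)\\L_2^\pm(t,\w)\rsm$ with nonsingular $L_1^\pm$ and principal functions $N^\pm(\w)=L_2^\pm(0,\w)\,(L_1^\pm(0,\w))^{-1}$. A standard argument developed in Chapter 5 of \cite{jonnf} shows that, under exponential dichotomy, the Lagrange plane spanned by the uniform principal solution at $+\infty$ coincides with the stable plane $l^+(\w)$ of Remark \ref{2.notasED}.3; nonsingularity of $L_1^+(0,\w)$ then yields $l^+(\w)\equiv\lsm I_n\\M^+(\w)\rsm$ with $M^+=N^+$, and symmetrically $M^-=N^-$ at $-\infty$. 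For (iii), set $K_0:=R^{-1}B^TM^+-R^{-1}g^T$: the closed-loop family $\bx'=\bigl(A-BR^{-1}g^T+BR^{-1}B^T M^+\bigr)(\wt)\,\bx$ is exactly the projection onto the first $n$ coordinates of the Hamiltonian flow \eqref{1.hamilw} restricted to the contracting subbundle $L^+$, hence it is of Hurwitz type at $+\infty$, so hypothesis \hyperlink{H}{H} is satisfied. The equivalence (1)$\Leftrightarrow$(2) of \cite{fajn4, jonu2} recalled in the Introduction then delivers the existence of admissible pairs for every $(\bx_0,\w)$, the feedback formula \eqref{1.frulew}$_\w$, and the minimum value $-(1/2)\,\bx_0^T M^+(\w)\,\bx_0$.

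The main obstacle is expected to be the identification carried out in (ii): although every vector of $l^+(\w)$ generates an exponentially decaying solution and is therefore a plausible candidate for a principal solution, the principal solution is characterized intrinsically by the asymptotic vanishing of the inverse of $\int_0^t (L_1^+)^{-1}(s,\w)\,H_3(\ws)\,((L_1^+)^T)^{-1}(s,\w)\,ds$, which is a priori a different condition; bridging the two descriptions rests on $H_3=BR^{-1}B^T\ge 0$ together with the fine comparison theory for Lagrange planes developed in \cite{jonnf}. Everything else is a direct application of results already stated in Section \ref{sec2} or in the Introduction.
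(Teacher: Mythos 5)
Your proposal is correct, and parts (ii) and (iii) essentially retrace the paper's own route: the identification you flag as the main obstacle in (ii) is exactly the content of Theorem 5.58 of \cite{jonnf} (uniform weak disconjugacy plus exponential dichotomy yield globally existing Weyl functions that agree with the principal functions), which is the same Chapter 5 result the paper cites; and in (iii) your verification of hypothesis \hyperlink{H}{H} via $K_0:=R^{-1}B^TM^+-R^{-1}g^T$, followed by the equivalence (1)$\Leftrightarrow$(2), is precisely the paper's appeal to Theorem 4.3 of \cite{fajn4} combined with the Introduction's observation that \hyperlink{H}{H} holds automatically once exponential dichotomy and $M^+$ globally exist --- you merely make the closed-loop/Hurwitz step explicit (solutions in $L^+$ satisfy $\by(t)=M^+(\wt)\,\bx(t)$, so the projected dynamics is the closed loop and decays exponentially) where the paper cites it. The genuine divergence is in (i). The paper reaches condition \hyperlink{D2}{D2} through control theory: full rank of $B(\w_0)$ and $m\ge n$ give null controllability of \eqref{1.controlw}$_{\w_0}$ by Conti's integral criterion, minimality upgrades this to uniform null controllability of the whole family by the results of \cite{jone0} (Theorem 6.4 of \cite{jonnf}), and Remark 6.8.1 of \cite{jonnf} identifies uniform null controllability with \hyperlink{D2}{D2}. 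You instead verify \hyperlink{D2}{D2} directly: if $\bx(\cdot,\w)\equiv\bcero$ on $[0,\infty)$, the first block of \eqref{1.hamilw} forces $B\,R^{-1}B^T\by\equiv\bcero$ there; minimality (via the omega-limit set of $\w$ being all of $\W$) sends the forward orbit into the open set where $B\,B^T$ is invertible, where $B\,R^{-1}B^T>0$ since $m\ge n$ and $R>0$, so $\by$ vanishes on an interval and uniqueness annihilates the solution. This argument is sound and buys self-containedness, bypassing the controllability machinery entirely; the paper's route buys the intermediate fact of uniform null controllability of \eqref{1.controlw}, which is of independent interest and is the standard bridge in this literature. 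From \hyperlink{D2}{D2} onward --- zero rotation number plus total support of the ergodic measure, through Remark \ref{2.notasUWD}.2 --- the two proofs coincide.
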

\begin{proof}
Let $U_A(t,\w_0)$ be the fundamental matrix solution of $\bx'=A(\w_0{\cdot}t)\,\bx$
with $U_A(0,\w_0)=I_n$. Since the rank of the $n\times m$ matrix $B(\w_0)$ is $n$
(as one of the Hypotheses \ref{3.hipos} guarantees), the
$n\times n$ matrix $B(\w_0)\,B^T(\w_0)$ is not singular. Hence
\[
 \int_0^\infty U_A^{-1}(t,\w_0)\,B(\w_0{\cdot}t)\,B^T(\w_0{\cdot}t)\,(U_A^{-1})^T(t,\w_0)\,dt>0\,,
\]
which ensures that the system \eqref{1.controlw}$_{\w_0}$ is null
controllable (see~\cite{cont}, Theorem 7.2.2). That is, for any $\bx_0\in\R^n$
there exists a time $t_0=t_0(\bx_0,\w)$ and an integrable control $\bu\colon[0,t_0]\to\R^n$
such that the solution of the corresponding
system with $\bx(0)=\bx_0$ satisfies $\bx(t_0)=\bcero$,
which means that any $\bx_0$ can be steered to $\bcero$ in finite time
by an integrable control $\bu$.
\par
According to the results of \cite{jone0} (see also Theorem 6.4 of
\cite{jonnf}),
the minimality of $\W$ and the previous property ensure the
uniform null controllability of the family \eqref{1.controlw}; that is,
all the systems are null controllable and there is a common time $t_0(\bx_0,\w)$
for all $(\bx_0,\w)\in\R^n\times\W$.
As explained in Remark 6.8.1 of \cite{jonnf},
this uniform null controllability
holds if and only if the family \eqref{1.hamilw} satisfies condition
\hyperlink{D2}{D2}. On the other hand,
Hypotheses \ref{3.hipos} ensure the existence of
an ergodic measure $m$ for which the rotation number vanishes;
and, as said in Section \ref{sec2},
the minimality of the set $\W$ ensures that $m$ has total support.
In this situation, according to Remark~\ref{2.notasUWD}.2,
the family \eqref{1.hamilw} is uniformly weakly disconjugate, which proves (i).
The simultaneous occurrence of uniform weak disconjugacy and
exponential dichotomy ensures (ii): Theorem 5.58 of \cite{jonnf} proves the global
existence of the Weyl functions $M^\pm$,
which agree with the principal functions (see Remark \ref{2.notasUWD}.1).
Finally, as recalled in the Introduction, and according to Theorem 4.3 of
\cite{fajn4} (see also Remark 7.7 and Theorem 7.10 of \cite{jonnf}),
the presence of exponential dichotomy and the
global existence of $M^+$ ensure the assertions in~(iii).
\end{proof}
\begin{nota}\label{3.notainc}
Note that in the situation of the previous theorem, the global existence of $M^+$
ensures that if $l^+(\w)\equiv\lsm L_{\w,1}\\L_{\w,2}\rsm$, then $M^+(\w)=L_{\w,2}\,(L_{\w,1})^{-1}$
and that, for every $\bx_0\in\R^n$ there exists a unique $\bc\in\R^n$
such that $\bx_0=L_{\w,1}\,\bc$ and hence a unique $\by_0$ such that
$\lsm\bx_0\\\by_0\rsm\in l^+(\w)$, being $\by_0=L_{\w,2}\,\bc=M^+(\w)\,\bx_0$,
Note also that $\bx_0^TM^+(\w)\,\bx_0=\bc^T L_{\w,1}^TL_{\w,2}\,(L_{\w,1})^{-1}L_{\w,1}\,\bc=
\bc^TL_{\w,1}^TL_{\w,2}\,\bc=\bc^TL_{\w,2}^TL_{\w,1}\,\bc$.
These are the reasons for which we asserted that Theorem \ref{3.teorBregular}
proves Theorem~\ref{1.teormain} if $m\ge n$: under its hypotheses,
every functional $\mI_{\bx_0,\w}$ can be minimized.
\end{nota}
The next technical lemma will also be used in the proof of Theorem~\ref{1.teormain}.
Note that Hypotheses~\ref{3.hipos} are not required.
\begin{lema}\label{3.lemaV}
Let us fix $\w\in\W$, and
let $\lsm L_1(t)\\L_2(t)\rsm$ be a $2n\times n$ solution of the linear
Hamiltonian system \eqref{1.hamilw}$_\w$. Let us also fix $\bc\in\R^n$ and define
\[
\begin{split}
 \bx(t)&:=L_1(t)\,\bc\,,\quad\by(t):=L_2(t)\,\bc\,,\\
 \bu(t)&:=R^{-1}(\wt)\,B^T(\wt)\,\by(t)-R^{-1}(\wt)\,g^T(\wt)\,\bx(t)\,,\\
 V(t)&:=\by^T(t)\,\bx(t)\,,
\end{split}
\]
and $\mQ$ by \eqref{1.defQw}. Then,
\[
 \frac{d}{dt}\,V(t)=2\,\mQ_\w(t,\bx(t),\bu(t))\,.
\]
\end{lema}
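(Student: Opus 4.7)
The plan is a direct computation. First, I would unpack what the Hamiltonian system \eqref{1.hamilw}$_\w$ says about $\bx(t)=L_1(t)\bc$ and $\by(t)=L_2(t)\bc$. Using the block structure of $H$ in \eqref{1.defH}, the first block row of the system reads
\[
 \bx'=(A(\wt)-B(\wt)R^{-1}(\wt)g^T(\wt))\,\bx+B(\wt)R^{-1}(\wt)B^T(\wt)\,\by,
\]
and this can be rewritten as $\bx'=A(\wt)\bx+B(\wt)\bu$ upon recognizing $\bu=R^{-1}B^T\by-R^{-1}g^T\bx$ on the right-hand side. Similarly, the second block row reads
\[
 \by'=(G(\wt)-g(\wt)R^{-1}(\wt)g^T(\wt))\,\bx+(-A^T(\wt)+g(\wt)R^{-1}(\wt)B^T(\wt))\,\by,
\]
which, grouping terms the same way, becomes $\by'=G(\wt)\bx-A^T(\wt)\by+g(\wt)\bu$. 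So the first step is just to verify these two convenient rewrites.

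Next I would differentiate $V(t)=\by^T(t)\bx(t)$ via the product rule,
\[
 V'(t)=(\by')^T\bx+\by^T\bx'
 =\big(\bx^TG(\wt)-\by^TA(\wt)+\bu^Tg^T(\wt)\big)\bx+\by^T\big(A(\wt)\bx+B(\wt)\bu\big),
\]
where I used symmetry of $G$. The two terms $\pm\by^T A(\wt)\bx$ cancel, leaving
\[
 V'(t)=\bx^TG(\wt)\bx+\bu^Tg^T(\wt)\bx+\by^TB(\wt)\bu.
\]

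The only non-automatic step is to reexpress $\by^TB(\wt)\bu$ so the result matches $2\mQ_\w$. Multiplying the defining feedback $\bu=R^{-1}(\wt)B^T(\wt)\by-R^{-1}(\wt)g^T(\wt)\bx$ by $R(\wt)$ and transposing yields $\by^TB(\wt)=\bu^TR(\wt)+\bx^Tg(\wt)$, so $\by^TB(\wt)\bu=\bu^TR(\wt)\bu+\bx^Tg(\wt)\bu$. Substituting back and using that $\bu^Tg^T(\wt)\bx=\bx^Tg(\wt)\bu$ (both are scalars and transposes of each other), the two $g$-cross terms combine into $2\bx^Tg(\wt)\bu$, giving
\[
 V'(t)=\bx^TG(\wt)\bx+2\,\bx^Tg(\wt)\bu+\bu^TR(\wt)\bu=2\,\mQ_\w(t,\bx(t),\bu(t))
\]
by the very definition \eqref{1.defQw} of $\mQ_\w$. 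There is no real obstacle here — the only thing to be careful about is using the feedback rule to turn $\by^TB$ into something expressible in $\bu$ and $\bx$ so that the $\by$'s disappear; once that is done the identity is immediate.
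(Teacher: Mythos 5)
Your proof is correct and takes essentially the same approach as the paper: a direct computation of $\frac{d}{dt}V(t)$ from the Hamiltonian system and the definition of $\bu$, with both sides reducing to $\bx^TG\bx-\bx^T g\,R^{-1}g^T\bx+\by^T B\,R^{-1}B^T\by$ (evaluated along $\wt$). Your organization—rewriting the block rows as $\bx'=A(\wt)\,\bx+B(\wt)\,\bu$ and $\by'=G(\wt)\,\bx-A^T(\wt)\,\by+g(\wt)\,\bu$ and then eliminating $\by^TB(\wt)$ via the feedback rule—merely streamlines the \lq\lq longer computation\rq\rq\ that the paper leaves to the reader.
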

\begin{proof}
It follows from the definitions in the statement that
\[
\begin{split}
 \bu(t)&=\big(R^{-1}(\wt)\,B^T(\wt)\,L_2(t)-R^{-1}(\wt)\,g^T(\wt)\,L_1(t)\big)\,\bc\,,\\
 V(t)&=\bc^TL_2^T(t)\,L_1(t)\,\bc\,.
\end{split}
\]
A straightforward and simple computation shows that
\[
\begin{split}
 \frac{d}{dt}\,V(t)=\bc^T\Big(&L_1^T(t)\,G(\wt)\,L_1(t)-L_1^T(t)\,g(\wt)\,R^{-1}(\wt)\,g^T(\wt)\,L_1(t)\\
 &+L_2(t)\,B(\wt)\,R^{-1}(\wt)\,B^T(\wt)\,L_2^T(t)\Big)\,\bc\,.
\end{split}
\]
And a longer computation shows that $\mQ_\w$ evaluated at $(t,\bx(t),\bu(t))$, namely
\[
 2\,\mQ_\w\Big(t,\,L_1(t)\,\bc,
 \,\big(R^{-1}(\wt)\,B^T(\wt)\,L_2(t)-R^{-1}(\wt)\,g^T(\wt)\,L_1(t)\big)\,\bc\Big),
\]
takes the same value.
\end{proof}
\noindent{\em Proof of Theorem~\ref{1.teormain}}.~Note that we can assume that $m<n$, since otherwise
Theorem~\ref{3.teorBregular} proves all the assertions (see also Remark~\ref{3.notainc}).
We first prove the result in a particular case, from where the general
one will follow easily.
\smallskip\par
{\sc Particular case}. Let us first assume that
$B(\w_0)=\lsm B_1(\w_0)\\B_2(\w_0)\rsm$, where $B_1(\w_0)$ is an $m\times m$
nonsingular matrix. We define the $n\times n$ matrix valued functions
\[
 B_\ep:=\left[\begin{array}{cc}B_1&0\\B_2&\ep\,I_{n-m}\end{array}\right],
 \quad  R_\ep:=\left[\begin{array}{cc}R&0\\0&\ep\,I_{n-m}\end{array}\right],
 \quad\text{and}\quad
  g_\ep:=\left[\begin{array}{cc}g&0\end{array}\right]
\]
for $\ep>0$,
where $0$ stands for the null matrix of the suitable dimension whenever it appears.
Let us consider the families of control problems and functionals given by
\begin{align}
 \bx'=&\,A(\wt)\,\bx+B_\ep(\wt)\,\bv\,,\label{3.controlwep}\\
 \mQ^\ep_\w(t,\bx,\bv):=&\,\Frac{1}{2}\left(\langle\,\bx, G(\wt)\,\bx\,\rangle
 +2\langle\,\bx,g_\ep(\wt)\,\bu\,\rangle+\langle\,\bv, R_\ep(\wt)\,\bv\,\rangle\right),\label{3.defQwep}\\
 \mI^\ep_{\bx_0,\w}\colon\mP^\ep_{\bx_0,\w}&\to\R\cup\{\pm\infty\}\,,\quad
 (\bx,\bv)\mapsto
 \int_0^{\infty} \mQ^\ep_\w(t,\bx(t),\bv(t))\,dt\,,\label{3.defIwep}
\end{align}
where $\mP^\ep_{\bx_0,\w}$ is the set of
measurable pairs $(\bx,\bv)\colon[0,\infty)\to\R^n\!\times\R^n$ solving \eqref{3.controlwep}$_\w$
with $\bx(0)=\bx_0$.
Note that both the state $\bx$ and the control $\bv$ are now $n$-dimensional.
The associated family of linear Hamiltonian systems,
\begin{equation}\label{3.hamilep}
 \left[\!\begin{array}{c}\bx\\\by\end{array}\!\right]'
 =H_\ep(\wt)\,\left[\!\begin{array}{c}\bx\\\by\end{array}\!\right]\,,\qquad \w\in\W\,,
\end{equation}
is given by
\[
\begin{split}
 H_\ep:=&\,\left[\begin{array}{cc} A-B_\ep\,R_\ep^{-1}g_\ep^T&\;
                        B_\ep\,R_\ep^{-1}B_\ep^T\\
                   G-g_\ep\,R_\ep^{-1}g_\ep^T &\; -A_0^T\!+g_\ep\,R_\ep^{-1}B_\ep^T
                   \end{array}\right]\\[.2cm]
  =&\left[\begin{array}{cc} A-B\,R^{-1}g^T&\;
                        B_\ep\,R_\ep^{-1}B_\ep^T\\
                   G-g\,R^{-1}g^T &\; -A_0^T\!+g\,R^{-1}B^T
                   \end{array}\right].
\end{split}
\]
The unique submatrix depending on $\ep$ is
\begin{equation}\label{3.Bep}
 B_\ep\,R_\ep^{-1}B_\ep^T=B\,R^{-1}B^T+\ep\left[\begin{array}{cc}0_m&0\\0&I_{n-m}
 \end{array}\right].
\end{equation}
Therefore \eqref{3.hamilep} agrees with \eqref{1.hamilw} for $\ep=0$, and hence
according to Hypotheses \eqref{3.hipos} it has exponential dichotomy over $\W$.
The robustness of this property (see Remark~\ref{2.notasED}.2)
ensures the exponential dichotomy
for the family \eqref{3.hamilep} if $\ep\in[0,\ep_0]$ for $\ep_0>0$ small enough.
\par
Let $m$ be the $\sigma$-ergodic measure in $\W$ appearing in Hypotheses \ref{3.hipos},
and represent by $\alpha_\ep(m)$ the corresponding
rotation number of the family \eqref{3.hamilep}, so that $\alpha_0(m)=0$.
According to Theorem 5.2 of \cite{fajn1}
(see also Theorem 2.28 of \cite{jonnf}), the exponential dichotomy
forces $\alpha_\ep(m)$ to take values in a discrete group if $\ep\in[0,\ep_0]$.
On the other hand, Theorem 4.3 of \cite{fajn1} (or Theorem 2.25 of \cite{jonnf})
ensures that $\alpha_\ep(m)$ varies continuously with respect to $\ep$.
Since $\alpha_0(m)=0$, we conclude that $\alpha_\ep(m)=0$ for $\ep\in[0,\ep_0]$.
\par
Since $B_\ep(\w_0)$ is nonsingular for any $\ep>0$, the problems for
$\ep\in(0,\ep_0]$ fulfill the corresponding Hypotheses~\ref{3.hipos}.
Hence, Theorem~\ref{3.teorBregular}
ensures the uniform weak disconjugacy of the families \eqref{3.hamilep},
the global existence of the Weyl function $M^+_\ep\colon\W\to\s_n(\R)$
associated to \eqref{3.hamilep}, and the
solvability of the minimization problems for $\mI^\ep_{\bx_0,\w}$ subject to
\eqref{3.controlwep}. In addition,
given $\w\in\W$ and $\bx_0\in\R^n$, the pair
$(\wit\bx^\ep_\w(t),\wit\bv^\ep_\w(t))$ minimizing $\mI^\ep_{\bx_0,\w}$ comes
from the solution $\lsm\wit\bx^\ep_\w(t)\\\wit\by^\ep_\w(t)\rsm$
of \eqref{1.hamilw}$_\w^\ep$ with initial data
$\lsm \bx_0\\M_\ep^+(\w)\,\bx_0\rsm$ via the analogous of the
feedback rule \eqref{1.frulew}$_\w$,
and the value of the minimum is
\begin{equation}\label{3.minep}
 \mI^\ep_{\bx_0,\w}(\wit\bx^\ep_\w,\wit\bv^\ep_\w)=-\frac{1}{2}\:\bx_0^TM_\ep^+(\w)\,\bx_0\,.
\end{equation}
For further purposes we point out that if $\bv\colon\R\to\R^n$ is written
as $\bv(t)=\lsm\bu(t)\\\bv_2(t)\rsm$ with $\bu\colon\R\to\R^m$, then
\begin{equation}\label{3.relQ}
 2\,\mQ^\ep_\w(t,\bx(t),\bv(t))=2\,\mQ_\w(t,\bx(t),\bu(t))+\ep\,|\bv_2(t)|^2\,,
\end{equation}
as easily deduced from the respective definitions \eqref{1.defQw}$_\w$
and \eqref{3.defQwep}$_\w$ of $\mQ_\w$ and $\mQ_\w^\ep$.
\par
It follows from \eqref{3.Bep} that $H_\ep$ satisfies the condition of monotonicity
required by Proposition 5.51 of \cite{jonnf}.
This result combined with Theorem~\ref{3.teorBregular}(ii) ensures a monotone behaviour
of the Weyl functions. In particular,
$M_{\ep_2}^+\le M_{\ep_1}^+$ if $\ep_0>\ep_1>\ep_2>0$.
Therefore, we can apply Theorem 1 of Kratz \cite{krat0}, which
establishes two alternatives for the limiting behaviour of $\bx_0^T M_\ep^+(\w)\,\bx_0$
as $\ep\to 0^+$, which depend of the pair $(\bx_0,\w)\in\R^n\times\W$\,:
\begin{itemize}
\item[{\bf (a)}] $\lim_{\ep\to 0^+} \bx_0^T M_\ep^+(\w)\,\bx_0$ belongs to $\R$ if and
only if there exists $\by_0\in\R^n$ such that $\lsm\bx_0\\\by_0\rsm\in l^+(\w)$. In this case,
if $l^+(\w)\equiv \lsm L_{\w,1}\\L_{\w,2}\rsm$ and $\lsm\bx_0\\\by_0\rsm=\lsm
L_{\w,1}\,\bc\\L_{\w,2}\,\bc\rsm$ for a vector $\bc\in\R^n$, then
$\lim_{\ep\to 0^+} \bx_0^T M_\ep^+(\w)\,\bx_0=\bc^TL^T_{\w,2}\,L_{\w,1}\,\bc$.
\item[{\bf (b)}] $\lim_{\ep\to 0^+} \bx_0^T M_\ep^+(\w)\,\bx_0=-\infty$ otherwise.
\end{itemize}
We will prove that
\begin{itemize}
\item[{\bf 1.}] if the pair $(\bx_0,\w)$ is in case {\bf (a)}, then there exist
admissible pairs for $\mI_{\bx_0,\w}$, and all the assertions in Theorem~\ref{1.teormain}(ii)
are true.
\item[{\bf 2.}] If the pair $(\bx_0,\w)$ is in case {\bf (b)}, then
$\mI_{\bx_0,\w}(\bx,\bu)=\infty$ for all $(\bx,\bu)\in\mP_{\bx_0,\w}$, so that
there are no admissible pairs.
\end{itemize}
These two facts will complete the proof in the particular case we have started with.
\par
Let us assume that $(\bx_0,\w)$ is in the situation {\bf (a)}, and take $\bc\in\R^n$
as there. We define $\lsm L_1(t)\\L_2(t)\rsm$ as the $2n\times n$ matrix valued
solution of \eqref{1.hamilw} with $\lsm L_1(0)\\L_2(0)\rsm=\lsm L_{\w,1}\\L_{\w,2}\rsm$.
We also define
\[
\begin{split}
 \wit\bx_\w(t)&:=L_1(t)\,\bc\,,\quad\wit\by_\w(t):=L_2(t)\,\bc\,,\\
 \wit\bu_\w(t)&:=R^{-1}(\wt)\,B^T(\wt)\,\wit\by_\w(t)-R^{-1}(\wt)\,g^T(\wt)\,\wit\bx_\w(t)\,,\\
 V(t)&:=\wit\by_\w^T(t)\,\wit\bx_\w(t)\,.
\end{split}
\]
Note that the pair $(\wit\bx_\w,\wit\bu_\w)$ belongs to
$\mP_{\bx_0,\w}$ and is the one of Theorem \ref{1.teormain}(ii).
It follows from Definition \ref{2.defED}, from the fact that
$\lsm\wit\bx_\w(0)\\\wit\by_\w(0)\rsm=\lsm\bx_0\\\by_0\rsm\in l^+(\w)$, and from
the definition of $\wit\bu_\w$ that
$(\wit\bx_\w,\wit\bu_\w)\in L^2(\R^+,\R^n)\times L^2(\R^+,\R^m)$; thus,
$(\wit\bx_\w,\wit\bu_\w)$ is an admissible pair for $\mI_{\bx_0,\w}$.
Lemma \ref{3.lemaV} yields
\[
 \mI_{\bx_0,\w}(\wit\bx_\w,\wit\bu_\w)=
 \frac{1}{2}\left(\lim_{t\to\infty} \wit\by_\w^T(t)\,\wit\bx_\w(t)-
  \wit\by_\w^T(0)\,\wit\bx_\w(0)\right)
  =-\frac{1}{2}\:\bc^TL^T_{\w,2}\,L_{\w,1}\,\bc\,.
\]
Here we use that $\lim_{t\to\infty}\lsm \wit\bx_\w(t)\\\wit\by_\w(t)\rsm=\lsm\bcero\\\bcero\rsm$,
which also follows from $\lsm\bx_0\\\by_0\rsm\in l^+(\w)$.
\par
Our next step is proving that $\mI_{\bx_0,\w}(\bar\bx,\bar\bu)\ge
-(1/2)\,\bc^TL^T_{\w,2}\,L_{\w,1}\,\bc$
for any other admissible pair $(\bar\bx,\bar\bu)$. Given such a pair,
we define $\bar\bv\colon\R\to\R^n$ by $\bar\bv(t)=\lsm\bar\bu(t)\\\bcero\rsm$.
Since $B(\wt)\,\bar\bu(t)=B_\ep(\wt)\,\bar\bv(t)$, the pair
$(\bar\bx,\bar\bv)$ is admissible for $\mI^\ep_{\bx_0,\w}$ for all $\ep>0$.
It follows from \eqref{3.relQ} and \eqref{3.minep} that
\begin{equation}\label{3.cota}
 \mI_{\bx_0,\w}(\bar\bx,\bar\bu)=\mI^\ep_{\bx_0,\w}(\bar\bx,\bar\bv)\ge
 \mI^\ep_{\bx_0,\w}(\wit\bx_\w,\wit\bv_\w)=-\frac{1}{2}\:\bx_0^TM^+_\ep(\w)\,\bx_0\,,
\end{equation}
so that the assertion follows from the information provided by {\bf (a)}
by taking limit as $\ep\to 0^+$. This completes the proof of {\bf 1}.
\par
In order to check {\bf 2}, we assume that
the pair $(\bx_0,\w)$ is in case {\bf (b)} and assume for contradiction that
there exists an admissible pair $(\bar\bx,\bar\bu)$ for
$\mI_{\bx_0,\w}$. Repeating the previous procedure we obtain
\eqref{3.cota}. Taking limit as $\ep\to 0^+$ and using the information
provided by {\bf(b)} we conclude that $\mI_{\bx_0,\w}(\bar\bx,\bar\bu)=\infty$,
which precludes the admissibility of the pair. This
completes the proofs of point {\bf 2} and of the initial case.
\smallskip\par
{\sc General case}. Basic results on linear algebra provide an orthogonal
$n\times n$ matrix valued function $P$ such that $P\,B(\w_0)=\lsm B_1(\w_0)\\B_2(\w_2)\rsm$
for an $m\times m$ nonsingular matrix $B_1(\w_0)$. Let us define
\[
\begin{split}
 \wit A(\w)&:= P\,A(\w)\,P^T\,,\\
 \wit B(\w)&:= P\,B(\w)\,,\\
 \wit G(\w)&:= P\,G(\w)\,P^T\,,\\
 \wit g(\w)&:= P\,g(\w)\,,
\end{split}
\]
and consider the families of control systems and functionals
\begin{align}
 \bz'=&\,\wit A(\wt)\,\bz+\wit B(\wt)\,\bu\,,\label{3.controlABt}\\
 \wit\mQ_\w(t,\bz,\bu):=&\,\Frac{1}{2}\left(\langle\,\bz,\wit G(\wt)\,\bz\,\rangle
 +2\langle\,\bz,\wit g(\wt)\,\bu\,\rangle+\langle\,\bu, R(\wt)\,\bu\,\rangle\right),\label{3.defQwt}\\
 \wit \mI_{\bz_0,\w}(\bz,\bu):=&\,
 \int_0^{\infty}\wit \mQ_\w(t,\bz(t),\bu(t))\,dt\,,\label{3.defIwt}
\end{align}
obtained from the initial ones by means of the change of variables $\bz(t)=P\,\bx(t)$.
The corresponding family of linear hamiltonian systems is
\begin{equation}\label{3.hamilwt}
 \left[\!\begin{array}{c}\bz\\\bw\end{array}\!\right]'=
 \wit H(\wt)\,\left[\!\begin{array}{c}\bz\\\bw\end{array}\!\right],\qquad \w\in\W
\end{equation}
with
\[
 \wit H:=\left[\begin{array}{cc} \wit A-\wit B\,R^{-1}\wit g^T&\;
                        \wit B\,R^{-1}\wit B^T\\
                   \wit G-\wit g\,R^{-1}\wit g^T &\; -\wit A^T\!+\wit g\,R^{-1}\wit B^T
                   \end{array}\right].
\]
A straightforward computation shows that \eqref{3.hamilwt} comes from \eqref{1.hamilw}
by means of the change of variables
\[
 \left[\!\begin{array}{c}\bz\\\bw\end{array}\!\right]=
 \left[\begin{array}{cc}P&0_n\\0_n&P\end{array}\right]
 \left[\!\begin{array}{c}\bx\\\by\end{array}\!\right]\,.
\]
It is clear that the family \eqref{3.hamilwt}
has exponential dichotomy over $\W$, since the change of variables
is given by a constant matrix, and that
$\lsm P&0_n\\0_n&P\rsm\lsm L_{\w,1}\\L_{\w,2}\rsm$
represents the Lagrange plane $\wit l^+(\w)$ of the solutions of \eqref{3.hamilwt}
bounded at $+\infty$ if and only if $\lsm L_{\w,1}\\L_{\w,2}\rsm$
represents $l^+(\w)$. In addition, since the matrix
$\lsm P&0_n\\0_n&P\rsm$ is symplectic, it follows from the results
of Section 2 of \cite{nono} (see also Section 2.1.1 of \cite{jonnf})
that the rotation number is also preserved: it is 0 for any $\sigma$-ergodic measure.
Therefore the transformed families are in the situation analyzed in the particular case.
It is clear that:
\begin{itemize}
\item[--] the pair $(\bar\bx,\bar\bu)$ is admissible for $\mI_{\bx_0,\w}$
if and only if the pair $(\bar\bz,\bar\bu)$ given by
$\bz(t)=P\,\bx(t)$ is admissible for $\wit\mI_{P\bx_0,\w}$.
\item[--] There exists $\by_0$ such that $\lsm\bx_0\\\by_0\rsm\in l^+(\w)$ if and only if
there exists $\bw_0$ such that $\lsm P\,\bx_0\\\bw_0\rsm\in\wit l^+(\w)$: just write
$\bw_0=P\by_0$.
\item[--] $\bc^T L_{\w,2}^TL_{\w,1}\,\bc=\bc^T \big(PL_{\w,2}\big)^T\big(P\,L_{\w,1}\big)\,\bc$\,.
\item[--] The equality $\bu(t)= R^{-1}(\wt)\,B^T(\wt)\,\by(t)- R^{-1}(\wt)\,g^T(\wt)\,\bx(t)$
holds if and only if
$\bu(t)= R^{-1}(\wt)\,\wit B^T(\wt)\,\bw(t)- R^{-1}(\wt)\,\wit g^T(\wt)\,\bz(t)$
for $\bz(t)=P\,\bx(t)$ and $\bw(t)=P\,\by(t)$.
\end{itemize}
It is easy to deduce the statements of Theorem~\ref{1.teormain} from
all these properties. The proof is hence complete. \hfill{\qed}
\begin{nota}\label{3.notaunico}
Let us represent $l^+(\w)\equiv\lsm L_{\w,1}\\L_{\w,2}\rsm$
and assume that $L_{\w,1}\bc=L_{\w,1}\bd$
for $\bd\ne\bc$. Then, since $l^+(\w)$ is a Lagrange plane,
$\bc^TL_{\w,2}^TL_{\w,1}\bc=\bd^TL_{\w,2}^TL_{\w,1}\bd=\bd^TL_{\w,2}^TL_{\w,1}\bc$.
We point out this property to clarify that there is not ambiguity
in the assertion of Theorem~\ref{1.teormain} concerning the value of the
minimum of $\mI_{\bx_0,\w}$, which of course is unique.
\end{nota}
\subsection{Coming back to a single problem}
Recall that our starting point was the single problem given by
\eqref{1.control}, \eqref{1.defQ} and \eqref{1.defI}, which we included in the family
given by \eqref{1.controlw}, \eqref{1.defQw} and \eqref{1.defIw} by means of the
hull procedure explained in Section \ref{2.sechull}. The initial problem is
one of those of the family: it corresponds to the point $\wit\w=C_0=(A_0,B_0,G_0,g_0,R_0)$
of $\W$.
\par
The conclusions of Theorem~\ref{1.teormain} apply to every $\w\in\W$, so that
they also apply to our initial problem. What about the hypotheses?
\begin{itemize}
\item[--] The hull $\W$ is minimal in the cases of recurrence
of the initial coefficients,
which includes (at least) the autonomous, periodic, quasi-periodic, almost periodic and
almost automorphic cases.
\item[--] The hypothesis on $B$ holds if there exists
$t_0\in\R$ such that $B_0(t_0)$ has full rank.
\item[--] In the case of minimality of $\W$, the exponential dichotomy of the family
\eqref{1.hamilw} holds if and only if the initial Hamiltonian system \eqref{1.hamil}
has exponential dichotomy on $\R$: see Remark \ref{2.notasED}.1.
\item[--] And if the base is minimal and uniquely ergodic (which is the case at least
if $C_0$ is almost periodic: see \cite{fink}), then the value of
the rotation number can be obtained
for any one of the systems of the family; for instance, for the initial one
(see Theorem 2.6 of \cite{jonnf}).
\end{itemize}
Therefore, a less general formulation of our main theorem reads as follows:
\begin{teor}\label{3.teorini}
Assume that the map $C_0$ given by \eqref{2.defC0} is almost periodic with
$R_0(t)\ge\rho\,I_m$ for a common $\rho>0$ and any $t\in\R$, that
there exists $t_0\in\R$ such that the $n\times m$ matrix $B_0(t_0)$ has full rank,
that the Hamiltonian system \eqref{1.hamil} has exponential dichotomy on $\R$, and that
\[
 \lim_{t\to\infty}\frac{1}{t}\:\arg\det (U_1(t)-iU_2(t))=0\,,
\]
where $U_H=\lsm U_1&U_3\\U_2&U_4\rsm$ is the matrix solution of \eqref{1.hamil}
with $U_H(0)=I_{2n}$.
Let $l^+$ be the Lagrange plane of the solutions of \eqref{1.hamil} bounded at $+\infty$,
and let us fix $\bx_0\in\R^n$. Then,
\begin{itemize}
\item[\rm(i)] there exist admissible pairs $(\bx,\bu)$ for the functional
$\mI_{\bx_0}$ given by \eqref{1.defI} if and only if there exists $\by_0\in\R^n$ such that
$\lsm\bx_0\\\by_0\rsm\in l^+$.
\item[\rm(ii)] In this case, the infimum of $\mI_{\bx_0}$ is finite.
In addition, if the columns of the
$2n\times n$ matrix $\lsm L_1\\L_2\rsm$ are a basis of $l^+$ and
$\lsm\bx_0\\\by_0\rsm=\lsm L_1\,\bc\\L_2\,\bc\rsm$ for a vector $\bc\in\R^n$, then
the infimum is given by $-(1/2)\,\bc^T L^T_2\,L_1\,\bc$,
and a minimizing pair $(\wit\bx,\wit\bu)$ is obtained from the solution
$\lsm\wit\bx(t)\\\wit\by(t)\rsm$ of \eqref{1.hamil} with initial data
$\lsm\bx_0\\\by_0\rsm$ via the feedback rule \eqref{1.frule}.
\end{itemize}
\end{teor}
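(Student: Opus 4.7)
The strategy is to reduce Theorem~\ref{3.teorini} to Theorem~\ref{1.teormain} applied at the distinguished point $\wit\w=C_0$ of the hull $\W$ constructed in Section~\ref{2.sechull} from the coefficient map $C_0=(A_0,B_0,G_0,g_0,R_0)$. The bulk of the work is checking that the four conditions of Hypotheses~\ref{3.hipos} follow from the hypotheses of Theorem~\ref{3.teorini}; once this is done, applying Theorem~\ref{1.teormain} at $\w=\wit\w$ and translating its conclusions back to the original data finishes the argument.

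I would verify Hypotheses~\ref{3.hipos} in order. Minimality of $\W$ in the almost periodic case is a classical property of the Bebutov flow recalled in Section~\ref{2.sechull}. The full rank condition on $B$ is immediate: taking $\w_0:=C_{t_0}\in\W$ gives $B(\w_0)=B_0(t_0)$, which has full rank by assumption. For exponential dichotomy, Remark~\ref{2.notasED}.1 shows that in the minimal setting, exponential dichotomy on $\R$ of \eqref{1.hamil} is equivalent to exponential dichotomy of the full family \eqref{1.hamilw} over $\W$. For the rotation number, almost periodicity of $C_0$ implies that the hull flow is uniquely ergodic, so there is a single $\sigma$-ergodic measure $m$ on $\W$; by Theorem~2.6 of~\cite{jonnf}, the limit defining $\alpha(m)$ in Definition~\ref{2.defrot} may be computed at any fixed $\w\in\W$, and evaluating at $\w=\wit\w$ one recognises the hypothesised limit $\lim_{t\to\infty}(1/t)\arg\det(U_1(t)-iU_2(t))$ as $\alpha(m)$. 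The standing hypothesis therefore forces $\alpha(m)=0$.

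With Hypotheses~\ref{3.hipos} in force, Theorem~\ref{1.teormain} applies with $\w=\wit\w$. By the very definition of the hull construction, $A(\wit\w{\cdot}t)=A_0(t)$ and the analogous identities hold for $B,G,g$ and $R$, so the control system \eqref{1.controlw}$_{\wit\w}$ coincides with \eqref{1.control}, the functional $\mI_{\bx_0,\wit\w}$ coincides with $\mI_{\bx_0}$, the Hamiltonian system \eqref{1.hamilw}$_{\wit\w}$ is \eqref{1.hamil}, and the feedback rule \eqref{1.frulew}$_{\wit\w}$ reduces to \eqref{1.frule}. In particular, the Lagrange plane $l^+(\wit\w)$ of the family equals the plane $l^+$ of solutions of \eqref{1.hamil} bounded at $+\infty$. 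Statements (i) and (ii) of Theorem~\ref{3.teorini} are then direct transcriptions of the corresponding statements of Theorem~\ref{1.teormain} at $\w=\wit\w$.

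The main point deserving care, rather than a genuine obstacle, is the identification of the scalar limit in the hypothesis with the rotation number $\alpha(m)$: this relies crucially on unique ergodicity (so that the ergodic average converges at the specific point $\wit\w$, not merely $m$-a.e.) together with the continuous-branch convention built into Definition~\ref{2.defrot}. Both ingredients are available under almost periodicity and the results quoted in Section~\ref{sec2}, so no new machinery is required.
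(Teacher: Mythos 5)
Your proposal is correct and follows exactly the route the paper itself takes: the discussion preceding Theorem~\ref{3.teorini} verifies Hypotheses~\ref{3.hipos} point by point (minimality of the hull under almost periodicity, $B(\w_0)=B_0(t_0)$ for $\w_0=C_{t_0}$, Remark~\ref{2.notasED}.1 for the exponential dichotomy, and unique ergodicity plus Theorem~2.6 of \cite{jonnf} to evaluate the rotation number at $\wit\w=C_0$), and then applies Theorem~\ref{1.teormain} at that point. Your emphasis on unique ergodicity being what allows the rotation-number limit to be read off at the specific point $\wit\w$, rather than merely $m$-a.e., is precisely the delicate point the paper also flags.
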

Note also that the condition of almost periodicity of $C_0$ can be replaced by the
less restrictive one of minimality and ergodic uniqueness of the flow on its hull.
\subsection{Examples of non global solvability}
We complete the paper with three examples. With the first one, of autonomous type,
we intend to show a simple scenario, in which every required computation
can be done by hand, and for which the existence
of admissible pairs depends on the choice of the initial data. The second one
is a generalization of almost periodic type.
\par
The third example, more
complex, shows a situation of non global solvability for which the associated
linear Hamiltonian system does not have exponential dichotomy,
so that one of the hypotheses of our results is not fulfilled.
In fact, the Hamiltonian system of this example is of nonuniform
hyperbolic type. Its aim is showing that the same ideas involved in the
description we have made can be extremely useful
in the analysis of other situations.
\begin{exa}\label{3.exaaut}
We consider the autonomous control problem and quadratic functional
\[
\begin{split}
 \left[\!\begin{array}{c}x_1\\x_2\end{array}\!\right]'=&\,
 \left[\begin{array}{cc} 1&1\\0&1\end{array}\right]\left[\!\begin{array}{c}x_1\\x_2\end{array}\!\right]+
 \left[\begin{array}{c} 1\\0\end{array}\right]u\,,\\
 \mQ\big(t,\lsm x_1\\x_2\rsm,u\big):=&\,\,\frac{1}{2}\,\left([\,x_1\;x_2\,]\left[\begin{array}{cc} 2&1\\1&1\end{array}\right] \left[\!\begin{array}{c}x_1\\x_2\end{array}\!\right]+
 2\,\,[\,x_1\;x_2\,]\left[\begin{array}{c}1\\1\end{array}\right]u+u^2\right),
\end{split}
\]
and we pose the problem of minimizing the corresponding functional
\begin{equation}\label{3.defIex}
 \mI_{\bx_0}\colon\mP_{\bx_0}\to\R\cup\{\pm\infty\}\,,
 \quad \big(\lsm x_1\\x_2\rsm,u\big)\mapsto\int_0^\infty
 \mQ\Big(t,\lsm x_1(t)\\x_2(t)\rsm,u(t)\Big)\,dt
\end{equation}
defined on the set $\mP_{\bx_0}$ of the measurable pairs $(\bx,u)$
satisfying the control system with $\bx(0)=\bx_0$.
\par
Let us check that all the hypotheses of Theorem~\ref{3.teorini} are satisfied.
It is obvious that the map $C_0$ given by the expression \eqref{2.defC0}
corresponding to this problem is constant
(and hence almost periodic), and that the rank of $B_0(t)\equiv\lsm 1\\0\rsm$ is
$1$ for all $t\in\R$. It is also easy to check that the linear
Hamiltonian system \eqref{1.hamil} takes the form
\[
 \left[\!\begin{array}{c}x_1\\x_2\\y_1\\y_2\end{array}\!\right]'=
 H\,\left[\!\begin{array}{c}x_1\\x_2\\y_1\\y_2\end{array}\!\right]\quad
 \text{for\;\,}
 H:=\left[\begin{array}{cccr}0&0&1&0\\0&1&0&0\\1&0&0&0\\0&0&0&-1\end{array}\right].
\]
Note that this system can be uncoupled to
\[
 \left[\begin{array}{c}x_1\\y_1\end{array}\right]'
 =\left[\begin{array}{cc} 0&1\\1&0\end{array}\right]
 \left[\begin{array}{c}x_1\\y_1\end{array}\right]
 \quad\text{and}\quad
 \left[\begin{array}{c}x_2\\y_2\end{array}\right]'
 =\left[\begin{array}{rc} \!1&\;\,0\\0&-1\end{array}\right]
 \left[\begin{array}{c}x_2\\y_2\end{array}\right].
\]
It is very simple to check that this two-dimensional systems (of Hamiltonian type)
have exponential dichotomy in $\R$.
In addition, the initial data of the solutions bounded at $+\infty$ and $-\infty$ are
given for the first one by $\lsm\;\,\,1\\-1\rsm$ and $\lsm 1\\1\rsm$ (up to constant multiples)
and for the second one by
$\lsm 0\\1\rsm$ and $\lsm 1\\0\rsm$.
It follows that our four-dimensional Hamiltonian system has exponential dichotomy in
$\R$ (see e.g.~Remark~\ref{2.notasED}.1), and that the Lagrange planes
\[
 l^+\equiv \left[\!\begin{array}{rr} 1&0\\0&0\\
 -1&0\\0&\;1\end{array}\right]
 \quad\text{and}\quad
 l^-\equiv \left[\begin{array}{cc} 1&0\\0&1\\
 1&0\\0&0\end{array}\right]
\]
are composed of the initial data of the solutions of the Hamiltonian system
which are bounded as $t\to+\infty$ and $t\to-\infty$, respectively.
It is also easy to compute fundamental matrix solutions of the two-dimensional systems,
and from them
\[
 U_H(t)=\frac{1}{2}\left[\begin{array}{cccc}e^t+e^{-t}&0&e^t-e^{-t}&0 \\
  0 &2\,e^t&0&0 \\e^t-e^{-t}&0&e^t+e^{-t}&0\\0&0&0&2\,e^{-t}\end{array}\right]\,,
\]
which is the matrix solution the Hamiltonian system with $U_H(0)=I_4$.
It follows from here that the last hypothesis of Theorem~\ref{3.teorini}
holds, since
\[
\begin{split}
 &\lim_{t\to\infty}\frac{1}{t}\arg\det\frac{1}{2}
 \left(\left[\begin{array}{cc}e^t+e^{-t}&0\\0&2\,e^t
 \end{array}\right]
 -i\left[\begin{array}{cc} e^t-e^{-t}&0\\0&0\end{array}\right]\right)\\
 &\quad\qquad\qquad\qquad\qquad\qquad\qquad
 =\lim_{t\to\infty}\frac{1}{t}\arctan \frac{1-e^{2t}}{1+e^{2t}}=0\,.
\end{split}
\]
\par
Theorem~\ref{3.teorini} ensures that there exist admissible pairs if
and only if $\bx_0=\lsm x_1\\x_2\rsm=\lsm 1&0\\0&0\rsm\,\lsm c_1\\c_2\rsm$ for
some $\bx=\lsm c_1\\c_2\rsm\in\R^2$. That is, if and only if
$x_2=0$, in which case we can take $\bc=\lsm x_1\\c_2\rsm$
for any $c_2\in\R$. This provides $\by_0=\lsm-1&0\\\;\,\,0&1\rsm\lsm x_1\\c_2\rsm=
\lsm -x_1\\\;\,\,c_2\rsm$.
In addition, also according to Theorem~\ref{3.teorini}, the minimum is given by
$-(1/2)[\,x_1\;c_2\,]\lsm -1&0\\\;\,\,0&1\rsm\lsm 1&0\\0&0\rsm\lsm x_1\\c_2\rsm=
x_1^2/2$. Now we compute
\[
 \left[\,\begin{array}{r}x_1(t)\\x_2(t)\\y_1(t)\\y_2(t)\end{array}\!\right]=
 U_H(t)\left[\,\!\begin{array}{r}x_1\\0\,\,\\-x_1\\c_2\end{array}\!\right]=
 \left[\,\!\begin{array}{r}x_1\,e^{-t}\\0\;\quad\\-x_1\,e^{-t}\\c_2\,e^{-t}\end{array}\!\right],
\]
apply the feedback rule \eqref{1.frule} in order to obtain the control
\[
 \wit u(t)=
 [\,1\;0\,]\,\left[\,\!\begin{array}{r}-x_1\,e^{-t}\\c_2\,e^{-t}\end{array}\!\right]-
 [\,1\;1\,]\,\left[\,\!\begin{array}{c}x_1\,e^{-t}\\0\end{array}\!\right]=-2\,x_1\,e^{-t}\,,
\]
and conclude that there is a unique minimizing pair
$(\wit\bx,\wit u)$ with the shape described in Theorem \ref{3.teorini},
given by
\[
 \wit\bx(t)=\left[\,\!\begin{array}{c}x_1\,e^{-t}\\0\end{array}\!\right],\qquad
 \wit u(t)=-2\,x_1\,e^{-t}\,.
\]
\end{exa}
\begin{exa}\label{3.exa2}
Let $f\colon\R\to\R$ be an almost periodic function such that
\begin{equation}\label{3.sisalmost}
 \left[\!\begin{array}{c}x_1\\y_1\end{array}\!\!\right]'
 =\left[\begin{array}{cccc} 0&1\\f(t)&0\end{array}\right]
 \left[\!\begin{array}{c}x_1\\y_1\end{array}\!\!\right]
\end{equation}
satisfies two conditions. The first one is the existence of
exponential dichotomy on $\R$ for \eqref{3.sisalmost}
combined with the fact that the initial data
of the solutions bounded at $+\infty$ and $-\infty$ are multiples of
$\lsm 1\\m^+\rsm$ and $\lsm 1\\m^-\rsm$. The second one is that
the rotation number (with respect to the unique ergodic measure
which exists in the hull in the almost periodic case) is zero.
As explained in the comments before Theorem~\ref{3.teorini},
this fact is is equivalent to say that, if $V(t)=\lsm V_1(t)&V_3(t)\\V_2(t)&V_4(t)\rsm$
is the matrix solution of \eqref{3.sisalmost} with $V(0)=I_2$, then
\begin{equation}\label{3.rot0}
 \lim_{t\to\infty}\frac{1}{t}\:\arg(V_1(t)-iV_2(t))=0\,.
\end{equation}
The function $f$ can be very simple. For instance, $f\equiv 1$, as in the previous example.
But it can also be extremely complex. For instance,
$f:=f_0+\lambda$ for any $\lambda>0$ where $f_0$
is the almost periodic function described by Johnson in~\cite{john12}, giving rise
to a non uniformly hyperbolic family of Schr\"{o}dinger equations: as
explained in Example 7.37 of \cite{jonnf}, the corresponding system \eqref{3.sisalmost}
has exponential dichotomy (if $\lambda>0$), the initial data of the solutions
bounded at $+\infty$ and $-\infty$ are $\lsm 1\\m^+\rsm$ and $\lsm 1\\m^-\rsm$
(up to a multiple), and the solution
$\lsm x_1(t)\\x_2(t)\rsm=V(t)\lsm 1\\m^+\rsm$ satisfies $x_1(t)\ne 0$ for all $t\in\R$,
from where we deduce that the rotation number is zero (use for instance
Propositions 5.8 and 5.65 of \cite{jonnf}).
We will refer again to the function $f_0$ in
Example \ref{3.exa3}.
\par
Let $h\colon\R\to\R$ be any other almost periodic function.
Given the control problem and the quadratic functional
\[
\begin{split}
 \left[\!\begin{array}{c}x_1\\x_2\end{array}\!\right]'=&\,\left[\begin{array}{cc} 1&h(t)\\0&1\end{array}\right]
 \left[\!\begin{array}{c}x_1\\x_2\end{array}\!\right]+
 \left[\begin{array}{c} 1\\0\end{array}\right]u\,,\\
 \mQ(t,\lsm x_1\\x_2\rsm,u):=&\;[\,x_1\;x_2\,]\left[\begin{array}{cl} f(t)+1&h(t)\\h(t)&h(t)^2\end{array}\right]\left[\!\begin{array}{c}x_1\\x_2\end{array}\!\right]+
 [\,x_1\;x_2\,]\left[\!\begin{array}{c}1\\h(t)\end{array}\! \right]u+u^2
\end{split}
\]
we pose the problem of minimizing the corresponding functional
\eqref{3.defIex}.
As in the previous example, we will begin by checking that all the
hypotheses of Theorem~\ref{3.teorini} are satisfied.
It is obvious that the map $C_0$ given by \eqref{2.defC0} is almost periodic, and
the rank of $B_0(t)\equiv\lsm 1\\0\rsm$ is $1$ for all $t\in\R$. Now, the
Hamiltonian system \eqref{1.hamil} takes the form
\[
 \left[\!\begin{array}{c}x_1\\x_2\\y_1\\y_2\end{array}\!\right]'=
 \left[\begin{array}{cccr}0&0&1&0\\0&1&0&0\\f(t)&0&0&0\\0&0&0&-1\end{array}\right]
 \left[\!\begin{array}{c}x_1\\x_2\\y_1\\y_2\end{array}\!\right]\,,
\]
which can be uncoupled to
\[
 \left[\begin{array}{c}x_1\\y_1\end{array}\right]'
 =\left[\begin{array}{cr} 0&1\\f(t)&0\end{array}\right]
 \left[\begin{array}{c}x_1\\y_1\end{array}\right]
 \quad\text{and}\quad
 \left[\begin{array}{c}x_2\\y_2\end{array}\right]'
 =\left[\begin{array}{rc} \!1&\;\,0\\0&-1\end{array}\right]
 \left[\begin{array}{c}x_2\\y_2\end{array}\right].
\]
As seen in Example~\ref{3.exaaut}, the second system has also exponential dichotomy,
and $\lsm 0\\1\rsm$ and $\lsm 1\\0\rsm$ are
the initial data of the solutions bounded at $+\infty$ and $-\infty$.
Hence, our four-dimensional Hamiltonian system has exponential dichotomy in
$\R$, and the Lagrange planes
\[
 l^+\equiv \left[\!\begin{array}{cc} 1&0\\0&0\\
 \,m^+&0\\0&\;1\end{array}\right]
 \quad\text{and}\quad
 l^-\equiv \left[\begin{array}{cc} 1&0\\0&1\\
 \,m^-&0\\0&0\end{array}\right]
\]
are composed of the initial data of the solutions of the four-dimensional system
which are bounded as $t\to+\infty$ and $t\to-\infty$, respectively.
In addition, the matrix solution $U_H(t)$ with $U_H(0)=I_4$ is given by
\[
 U_H(t)=\left[\begin{array}{cccc}V_1(t)&0&V_3(t)&0 \\
  0 &e^t&0&0 \\V_2(t)&0&V_4(t)&0\\0&0&0&e^{-t}\end{array}\right]\,,
\]
and, using \eqref{3.rot0}, we see that
\[
\begin{split}
 &\lim_{t\to\infty}\frac{1}{t}\arg\det\left(\left[\begin{array}{cc}V_1(t)&0\\0&e^t
 \end{array}\right]
 -i\left[\begin{array}{cc} V_2(t)&0\\0&0\end{array}\right]\right)\\
 &\qquad\qquad\qquad\qquad=\lim_{t\to\infty}\frac{1}{t}\:
 \arg\big(e^t(V_1(t)-iV_2(t))\big)=0\,.
\end{split}
\]
Therefore, all the hypotheses of Theorem~\ref{3.teorini} are satisfied, as asserted.
As in Example \ref{3.exaaut}, we conclude that there exist admissible pairs if
and only if $\bx_0=\lsm x_1\\0\rsm=\lsm 1&0\\0&0\rsm\,\lsm x_1\\c_2\rsm$
for any $c_2\in\R$, which provides $\by_0=\lsm m^+&0\\0&1\rsm\lsm x_1\\c_2\rsm=
\lsm m^+\!x_1\\c_2\rsm$. In this case the minimum of the functional
is $-x_1^2\,m^+\!/2\,$. And, since
\[
 \left[\begin{array}{cccc}V_1(t)&0&V_3(t)&0 \\
  0 &e^t&0&0 \\V_2(t)&0&V_4(t)&0\\0&0&0&e^{-t}\end{array}\right]
 \left[\!\begin{array}{c}x_1\\0\\m^+ x_1\\c_2\end{array}\!\right]=
 \left[\!\begin{array}{c}(V_1(t)+V_3(t)\,m^+)\,x_1\\0\\(V_2(t)+V_4(t)\,m^+)\,x_1\\c_2\,e^{-t}
 \end{array}\!\right],
\]
then the feedback rule \eqref{1.frule}
provides the minimizing pair $(\wit\bx,\wit u)$, with
\[
\wit\bx(t)=\left[\!\begin{array}{c} (V_1(t)+V_3(t)\,m^+)\,x_1\\0\end{array}\!\right],
\quad \wit u(t)=\big(V_2(t)-V_1(t)+(V_4(t)-V_3(t))\,m^+\big)\,x_1\,.
\]
\end{exa}
\begin{exa}\label{3.exa3}
Let $f_0$ be the almost periodic function described by Johnson in~\cite{john12},
already mentioned in Example \ref{3.exa2}, and let $h_0$ be any almost periodic
function with frequency modulus contained in that of $f_0$. Our aim is to analyze the
minimization problem for the functional
\[
 \mI_{\bx_0}\colon\mP_{\bx_0}\to\R\cup\{\pm\infty\}\,,\quad
 \big(\lsm x_1\\x_2\rsm,u\big)\mapsto\int_0^\infty
 \mQ\Big(t,\lsm x_1(t)\\x_2(t)\rsm,u(t)\Big)\,dt,
\]
which is evaluated on the set $\mP_{\bx_0}$ of measurable pairs $(\bx,u)$ solving
\[
 \left[\!\begin{array}{c}x_1\\x_2\end{array}\!\right]'=\left[\begin{array}{cc} 1&h_0(t)\\0&1\end{array}\right]
 \left[\!\begin{array}{c}x_1\\x_2\end{array}\!\right]+
 \left[\begin{array}{c} 1\\0\end{array}\right]u
\]
with $\bx(0)=\bx_0$, and which is given by
\[
 \mQ(t,\lsm x_1\\x_2\rsm,u):=[\,x_1\;x_2\,]\left[\begin{array}{cl} f_0(t)+1&h_0(t)\\h_0(t)&h_0(t)^2\end{array}\right]
 \left[\!\begin{array}{c}x_1\\x_2\end{array}\!\right]+
 [\,x_1\;x_2\,]\left[\!\begin{array}{c}1\\h_0(t)\end{array}\! \right]u+u^2.
\]
For reasons which will became clear later, we must
consider in this case the (common) hull $\W$ of $f_0$ and $h_0$,
whose construction we explained in Section \ref{2.sechull}.
This provides the families
\begin{align*}
\left[\!\begin{array}{c}x_1\\x_2\end{array}\!\right]'=&\,\left[\begin{array}{cc} 1&
h(\wt)\\0&1\end{array}\right]
\left[\!\begin{array}{c}x_1\\x_2\end{array}\!\right]+
 \left[\begin{array}{c} 1\\0\end{array}\right]u\,,\\
 \mQ_\w(t,\lsm x_1\\x_2\rsm,u):=&\;[\,x_1\;x_2\,]\left[\begin{array}{cl}
 f(\wt)+1&\!h(\wt)\\h(\wt)&\!h(\wt)^2\end{array}\right]
 \left[\!\!\begin{array}{c}x_1\\x_2\end{array}\!\!\right]+
 [\,x_1\;x_2\,]\left[\!\!\begin{array}{c}1\\h(\wt)\end{array}\!\!\right]u+u^2,\\
 \mI_{\bx_0,\w}\colon\mP_{\bx_0,\w}&\to\R\cup\{\pm\infty\}\,,\quad
 \big(\lsm x_1\\x_2\rsm,u\big)\mapsto\int_0^\infty
 \mQ_\w\Big(t,\lsm x_1(t)\\x_2(t)\rsm,u(t)\Big)\,dt
\end{align*}
for $\w\in\W$, where $\mP_{\bx_0,\w}$ is the set
of the measurable pairs solving the control problem corresponding to $\w$ with
initial state $\bx_0$.
\par
The corresponding family \eqref{3.hamilwt} takes the form
\begin{equation}\label{3.hamilex}
 \left[\!\begin{array}{c}x_1\\x_2\\y_1\\y_2\end{array}\!\right]'=
 \left[\begin{array}{cccr}0&0&1&0\\0&1&0&0\\f(\wt)&0&0&0\\0&0&0&-1\end{array}\right]
 \left[\!\begin{array}{c}x_1\\x_2\\y_1\\y_2\end{array}\!\right]\,,
\end{equation}
which can be uncoupled to
\begin{equation}\label{3.unc}
 \left[\begin{array}{c}x_1\\y_1\end{array}\right]'
 =\left[\begin{array}{cr} 0&1\\f(\wt)&0\end{array}\right]
 \left[\begin{array}{c}x_1\\y_1\end{array}\right]
 \quad\text{and}\quad
 \left[\begin{array}{c}x_2\\y_2\end{array}\right]'
 =\left[\begin{array}{rc} \!1&\;\,0\\0&-1\end{array}\right]
 \left[\begin{array}{c}x_2\\y_2\end{array}\right].
\end{equation}
As said in Example \ref{3.exa2}, it is proved in \cite{john12}
that the left family of systems in \eqref{3.unc} does not have exponential
dichotomy over $\W$. According to Remark \ref{2.notasED}.1,
this assertion is equivalent to the the existence of at least a nontrivial bounded solution
for at least one $\w\in\W$. Clearly this bounded solution provides a bounded
solution for \eqref{3.hamilex}$_\w$, so that the four-dimensional family
does not have exponential dichotomy over $\W$. Hence, we cannot apply Theorem \ref{1.teormain}.
\par
Let us now take $\lambda>0$ and consider the new families
\begin{align*}
 \left[\!\begin{array}{c}x_1\\x_2\end{array}\!\right]'=&\,\left[\begin{array}{cc} 1&h(\wt)\\0&1\end{array}\right]
 \left[\!\begin{array}{c}x_1\\x_2\end{array}\!\right]+
 \left[\begin{array}{cc} 1&0\\0&\lambda\end{array}\right]\left[\!\begin{array}{c}v_1\\v_2\end{array}\!\right]\,,\\
 \mQ_\w^\lambda(t,\lsm x_1\\x_2\rsm,\lsm v_1\\v_2\rsm):=
 &\;[\,x_1\;x_2\,]\left[\begin{array}{cl}
 f(\wt)+\lambda+1&h(\wt)\\h(\wt)&
 h(\wt)^2\end{array}\right]\left[\!\begin{array}{c}x_1\\x_2\end{array}\!\right]\\
 &\quad + [\,x_1\;x_2\,]\left[\begin{array}{cc}1&0\\h(t)&0\end{array}\! \right]
 \left[\!\begin{array}{c}v_1\\v_2\end{array}\!\right]+
 [\,v_1\;v_2\,]\left[\begin{array}{cc} 1&0\\0&\lambda\end{array}\right]\left[\!\begin{array}{c}v_1\\v_2\end{array}\!\right],\\
 \mI^\lambda_{\bx_0,\w}\colon\mP^\lambda_{\bx_0,\w}\to&\;\R\cup\{\pm\infty\}\,,
 \quad\big(\lsm x_1\\x_2\rsm,\lsm v_1\\v_2\rsm\big)\mapsto\int_0^\infty
 \mQ^\lambda_\w\Big(t,\lsm x_1(t)\\x_2(t)\rsm,\lsm v_1(t)\\v_2(t)\rsm\Big)\,dt\,,
\end{align*}
with associated family of linear Hamiltonian systems given by
\begin{equation}\label{3.hamilexl}
 \left[\!\begin{array}{c}x_1\\x_2\\y_1\\y_2\end{array}\!\right]'=
 \left[\begin{array}{cccr}0&0&1&0\\0&1&0&\lambda\\f(\wt)+\lambda&0&0&0\\0&0&0&-1\end{array}\right]
 \left[\!\begin{array}{c}x_1\\x_2\\y_1\\y_2\end{array}\!\right]\,.
\end{equation}
Let us fix $\lambda>0$. Then the family of systems $\lsm x_1\\y_1\rsm'
=\lsm0&1\\f(\wt)+\lambda&0\rsm \lsm x_1\\y_1\rsm$
has exponential dichotomy over $\W$, and the Lagrange
planes of the solutions which are bounded at $\pm\infty$ are
represented by $\lsm 1\\m^\pm_\lambda(\w)\rsm$: see again Example 7.37 of \cite{jonnf}.
It is easy to check that also the family $\lsm x_2\\y_2\rsm'=\lsm 1&\;\lambda\\0&-1\rsm
\lsm x_2\\y_2\rsm$ has exponential dichotomy, with Lagrange
planes of the solutions bounded at $+\infty$ and $-\infty$
given by $\lsm 1\\-2/\lambda\rsm$ and $\lsm 1\\0\rsm$ respectively.
Therefore the family \eqref{3.hamilexl} has exponential dichotomy, with
\[
 l^+_\lambda(\w)\equiv \left[\!\begin{array}{cc} 1&0\\0&1\\
 \,m^+_\lambda(\w)&0\\0&\!-2/\lambda\end{array}\right]
 \equiv \left[\!\begin{array}{cc} 1&0\\0&\!-\lambda/2\\
 \,m^+_\lambda(\w)&0\\0&1\end{array}\right],\;\quad
 l^-_\lambda(\w)\equiv \left[\begin{array}{cc} 1&0\\0&1\\
 \,m^-_\lambda(\w)&0\\0&0\end{array}\right].
\]
As in Example \ref{3.exa2} we can check by direct computation
that the rotation number of \eqref{3.hamilexl} is zero.
In these conditions, Theorem \ref{3.teorBregular}
ensures the solvability of the problem for $\mI^\lambda_{\bx_0,\w}$
for all $(\bx_0,\w)$, being the value of the minimum (if $\bx_0=\lsm x_1\\ x_2\rsm$)
\begin{equation}\label{3.valmin}
 -\frac{1}{2}\;[\,x_1\;x_2\,]\left[\begin{array}{cc}m^+_\lambda(\w)&0
 \\0&-2/\lambda\end{array}\right]\left[\!\begin{array}{c}x_1\\x_2\end{array}\!\right]
 =-\frac{1}{2}\,x_1^2\,m_\lambda^+(\w)+\frac{1}{\lambda}\,x_2^2\,.
\end{equation}
\par
On the other hand, it follows from Theorem 5.58 and
Proposition 5.51 of \cite{jonnf}
that, if $0<\lambda_1\le\lambda_2$, then
$m^+_{\lambda_2}(\w)\le m^+_{\lambda_1}(\w)\le m^-_{\lambda_1}(\w)\le m^-_{\lambda_2}(\w)$.
Therefore, there exist the limits $n^\pm(\w):=\lim_{\lambda\to 0^+}m^\pm_\lambda(\w)$
for all $\w\in\W$.
(As a matter of fact, $n^\pm$ are the principal functions of the
system corresponding to $\lambda=0$, which is uniformly weakly disconjugate:
see for instance Theorem 5.61 of \cite{jonnf}).
Moreover, there exists an invariant subset
$\W_0\subsetneq\W$ with full measure (for the unique ergodic measure on
the hull) such that, if $\w\in\W_0$, then: $n^+(\w)<n^-(\w)$, and the
solution of \eqref{3.hamilex}$_\w$
with initial data $\lsm x_0\\n^\pm(\w)\,x_0\rsm$, which can be written as
$\lsm x_1(t)\\n^\pm(\wt)\,x_1(t)\rsm$, belongs to $L^2([0,\infty),\R^2)$
and satisfies
\begin{equation}\label{3.limex}
 \lim_{t\to\pm\infty}\left[\begin{array}{c} x_1(t)\\n^\pm(\wt)\,x_1(t)\end{array}\!\right]
 =\left[\begin{array}{c}0\\0\end{array}\!\right].
\end{equation}
The proofs of the last assertions follow from the properties
of this family of two-dimensional systems described in \cite{john12},
and are based in the fact that the functions $n^+$ and $n^-$ provide the
Oseledets subbundles associated to the negative and positive
Lyapunov exponents of the family.
The interested reader can find in Theorem 6.3(iii) of \cite{jnot} a
detailed proof (formulated for the quasiperiodic case, but
applicable without changes to the almost periodic case), and
in Section 8.7 of \cite{jonnf} an exhaustive description of
the construction of an example with similar
behavior.
\par
Let us now observe that the limits as $\lambda\to 0^+$ of
$l^\pm_\lambda(\w)$ are the Lagrange planes
\[
 l^+(\w)\equiv\left[\!\begin{array}{cc} 1&0\\0&0\\
 \,n^+(\w)&0\\0&1\end{array}\right]\quad\text{and}\quad
 l^-(\w)\equiv \left[\begin{array}{cc} 1&0\\0&1\\
 \,n^-(\w)&0\\0&0\end{array}\right].
\]
In addition, if $\w\in\W_0$, then the solutions of
\eqref{3.hamilex}$_\w$ with initial data in $l^\pm(\w)$
tend to $\bcero$ as $t\to\pm\infty$, as deduced
from \eqref{3.limex} and from the behavior of the
solutions of the right-hand system of \eqref{3.unc}.
We will see that, if $\w\in\W_0$, then the initial states
$\bx_0$ for which the minimization problem is solvable,
as well as the value of the minimum and a minimizing pair,
can be obtained from $l^+(\w)$. The analogy with the
situation described in Theorem \ref{1.teormain} will be obvious.
\par
Let us fix $\w\in\W_0$ and $\bx_0=\lsm x_1\\ x_2\rsm\in\R^2$, and let
$\big(\lsm \bar x_1\\\bar x_2\rsm,\bar u\big)$ be an admissible pair for
$\mI_{\bx_0,\w}$. It is easy to check that
$\mI_{\bx_0,\w}\big(\lsm \bar x_1\\\bar x_2\rsm,\bar u\big)=
\mI^\lambda_{\bx_0,\w}\big(\lsm \bar x_1\\\bar x_2\rsm,\lsm \bar u\\0\rsm\big)$
for any $\lambda>0$, which combined with \eqref{3.valmin}
ensures that
\begin{equation}\label{3.II}
 \mI_{\bx_0,\w}\big(\lsm \bar x_1\\\bar x_2\rsm,\bar u\big)\ge
 -\frac{1}{2}\,x_1^2\,m_\lambda^+(\w)+\frac{1}{\lambda}\,x_2^2
\end{equation}
for any $\lambda>0$. By taking limit as $\lambda\to 0^+$
we conclude from the admissibility and from
the existence of the real limit $n^+(\w):=\lim_{\lambda\to 0^+}m_\lambda^+(\w)$
that $x_2$ must be 0. And this is equivalent to
ensure that there exists $\lsm y_1\\y_2\rsm\in\R^2$ such that
$\lsm x_1\\x_2\\y_1\\y_2\rsm\in l^+(\w)$: we can take
$\lsm y_1\\y_2\rsm=\lsm n^+x_1\\c_2\rsm$
for any $c_2\in\R$.
\par
We work from now on with $\bx_0=\lsm x_1\\0\rsm$.
As just said, taking limit as $\lambda\to 0^+$ in \eqref{3.II} yields
\[
 \mI_{\bx_0,\w}\big(\lsm\bar x_1\\\bar  x_2\rsm,\bar u\big)\ge
 -\frac{1}{2}\,x_1^2\,n^+(\w)
\]
for any admissible pair. In fact, the right value is the infimum,
since it is reached at the admissible
pair $\big(\lsm \wit x_1\\0\rsm,\wit u\big)$ defined
from the solution $\lsm \wit x_1(t)\\0\\n^+(\wt)\,\wit x_1(t)\\0\rsm$ of \eqref{3.hamilex}$_\w$
with initial data $\lsm x_1\\0\\n^+(\w)\,x_1\\0\rsm$
via the feedback rule (analogous to \eqref{1.frulew}$_\w$)
\[
 \wit u(t)=
 [\,1\;0\,]\,\left[\,\!\begin{array}{c}n^+(\wt)\,\wit x_1(t)\\0\end{array}\!\right]
 -[\,1\;1\,]\,\left[\,\!\begin{array}{c}\wit x_1(t)\\0\end{array}\!\right]=
 (n^+(\wt)-1)\,\wit x_1(t)\,.
\]
This assertion follows from Lemma \ref{3.lemaV} (which does not require
Hypotheses \ref{3.hipos}) combined with \eqref{3.limex}.
\par
We finally observe that there is no way to know if the initial problem
of this example corresponds to a point $\w\in\W_0$ (although the probability is
1, as the measure of the set $\W_0$). In other words,
this procedure does not allow us to provide conditions under which the initial
minimization problem is solvable. This is one more sample of the
extreme complexity which may arise in the nonautonomous dynamics.
\end{exa}
\noindent{\bf Acknowledgement.} We thank an anonymous referee, whose
careful reading and comments have contributed to improve the presentation of the paper. 

\end{document}